\newtheorem{lemma}{Lemma}[section]
\newtheorem{proposition}{Proposition}[section]
\newtheorem{theorem}{Theorem}[section]
\newtheorem{definition}{Definition}[section]
\newtheorem{remark}{\textbf{Remark}}[section]
\newtheorem{example}{\textbf{Example}}[section]
\newtheorem{assumption}{\textbf{Assumption}}[section]
\newtheorem{algorithm}{Algorithm}[section]
\newcommand{\iprod}[2]{\left\langle {#1}, {#2} \right\rangle}
\newcommand{\prox}{\mathrm{prox}}
\newcommand{\Espace}{\mathcal{E}}
\newcommand{\argmin}{\arg\min}
\newcommand{\norm}[1]{\left\lVert {#1} \right\rVert}
\newcommand{\bracket}[1]{\left( {#1} \right)}
\newcommand{\grad}{\nabla}
\newcommand{\domain}{\mathrm{dom}}
\newcommand{\expect}{\mathbb{E}}
\newcommand{\lea}{\stackrel{\rm(a)}{\le}}
\newcommand{\leb}{\stackrel{\rm(b)}{\le}}
\newcommand{\lec}{\stackrel{\rm(c)}{\le}}
\renewcommand\fs@ruled{%
  \def\@fs@cfont{\rmfamily}%
  \let\@fs@capt\floatc@plain%
  \def\@fs@pre{\hrule height.8pt depth0pt \kern2pt}%% or use \def\@fs@pre{} to get rid of the top rule
  \def\@fs@post{\kern2pt\hrule\relax}%% or use \def\@fs@post{} to get rid of the last rule
  \def\@fs@mid{\kern2pt\hrule\kern2pt}%% or use \def\@fs@mid{} to get rid of the middle rule
  \let\@fs@iftopcapt\iffalse}
\begin{document}
 \title{Accelerated Randomized Mirror Descent Algorithms For Composite Non-strongly Convex Optimization}
\author{Le Thi Khanh Hien}
\address[Le Thi Khanh Hien]{Department of Industrial and Systems Engineering and Management\\
  National University of Singapore \\
  Singapore 117576}
\email{iseltkh@nus.edu.sg}
\author{Cuong V. Nguyen}
\address[Cuong V. Nguyen]{Department of Engineering \\ University of Cambridge}
\email{vcn22@cam.ac.uk}
\author{Huan Xu}
\address[Huan Xu]{School of Industrial \& Systems Engineering\\
     Georgia Institute of Technology}
\email{huan.xu@isye.gatech.edu}
\author{Canyi Lu}
\address[Canyi Lu]{ Department of Electrical and Computer Engineering  \\
     National University of Singapore \\
     Singapore 117583}
\email{canyilu@u.nus.edu.sg}
\author{Jiashi Feng}
\address[Jiashi Feng]{Department of Electrical and Computer Engineering  \\
     National University of Singapore \\
     Singapore 117583}
\email{elefjia@nus.edu.sg }

\maketitle
\begin{abstract}
We consider the problem of minimizing the sum of an average function of a large number of smooth convex components and a general, possibly non-differentiable, convex function. Although many methods have been proposed to solve this problem with the assumption that the sum is strongly convex, few methods support the non-strongly convex case. Adding a small quadratic regularization is a common trick used to tackle non-strongly convex problems; however, it may cause loss of sparsity of solutions or weaken the performance of the algorithms. Avoiding this trick, we propose an accelerated randomized mirror descent method for solving this problem without the strongly convex assumption. Our method extends the deterministic accelerated proximal gradient methods of Paul Tseng and can be applied even when proximal points are computed inexactly. We also propose a scheme for solving the problem when the component functions are non-smooth.
\end{abstract}

% Sample
%\KEYWORDS{}
%\MSCCLASS{}
%\ORMSCLASS{}
%\HISTORY{}

% Fill in data. If unknown, outcomment the field
\keywords{Acceleration techniques; Mirror descent method; Inexact proximal point;  Composite optimization}

\maketitle

\section{Introduction} 
We let $\mathcal{E}$ be a finite dimensional real linear space endowed with a norm $\norm{\cdot}$ and let $\mathcal{E}^*$ be the space of continuous linear functionals on $\mathcal{E}$. We use $\iprod{x^*}{x}$ to denote the value of $x^*\in\mathcal{E}^*$ at $x\in \mathcal{E}$, and $\norm{\cdot}_*$ to denote the dual norm, i.e., $\norm{x^*}_*=\sup_{\norm{x}\leq 1}\iprod{x^*}{x}$. We consider the following composite convex optimization problem:
\begin{equation}\label{eq:composite}
\min\limits_{x\in \mathcal{E}} \left\{ F^P(x):=F(x) + P(x)\right\},
\end{equation}
where $F(x)=\frac{1}{n}\sum\limits_{i=1}^n f_i(x)$. Throughout this paper we focus on problems satisfying the following assumption.
\begin{assumption}
\label{asstn:convexsmooth}
Function $P(x)$ is lower semi-continuous and convex. The domain of $P(x)$, $\domain(P)=\{x\in \Espace, P(X)< +\infty\}$, is closed. Each function $f_i(x)$ is convex and $L_i$-Lipschitz smooth, i.e., it is differentiable on an open set containing $\domain(P)$ and its gradient is Lipschitz continuous with constant $L_i$:
\[
\norm{\grad f_i(x)-\grad f_i(y)}_*\leq L_i \norm{x-y}, \forall x,y \in \domain(P).
\]
\end{assumption}
Problems of this form often appear in machine learning and statistics. For examples, in $l_1$-regularized logistic regression, we have $f_i(x)= \log (1+ e^{-b_i\iprod{a_i}{x}})$ where $a_i\in \mathbb{R}^p$, $b_i\in \{-1,1\}$, and $P(x)= \lambda \norm{x}_1$. In Lasso, we have $f_i(x)= \frac12 (\iprod{a_i}{x}-b_i)^2$ and $P(x)=\lambda \norm{x}_1$. More generally, any $l_1$-regularized empirical risk minimization problem 
$\min\limits_{x\in \mathbb{R}^p} \left\{\frac{1}{n} \sum\limits_{i=1}^n f_i(x) + \lambda \norm{x}_1\right\}$
with smooth convex loss functions $f_i$ belongs to the framework \eqref{eq:composite}. We can also use the function $P(x)$ for modelling purpose. For example, when $P(x)$ is the indicator function, i.e., $P(x)= 0$ if $x\in X$ and $P(x)= \infty$ otherwise, \eqref{eq:composite} becomes the popular constrained finite sum optimization problem
$
\min\limits_{x\in X} \left\{F(x):=\frac{1}{n}\sum\limits_{i=1}^n f_i(x)\right\}.
$

One well-known method to solve \eqref{eq:composite} is the proximal gradient descent (PGD) method. Let the proximal mapping of a convex function $P(x)$ be defined as:
$$\prox_P(x)=\argmin\limits_u \left\{P(u)+\frac12\norm{u-x}_2^2\right\}.$$ 
At each iteration, PGD calculates a proximal point:
\[
\begin{split}
x_{k}=\prox_{\gamma_k P}(x_{k-1}-\gamma_k \grad F(x_{k-1})) =\argmin\limits_{x} \Big\{ \iprod{\grad F(x_{k-1})}{x} + \frac{1}{2\gamma_k}\norm{x-x_{k-1}}^2 + P(x)\Big\},
\end{split}
\]
where $\gamma_k$ is the step size at the $k$-th iteration.
Methods such as gradient descent, which computes $x_k=x_{k-1}-\gamma_k \grad F(x_{k-1})$, or projection gradient descent, which computes $x_k=\Pi_X(x_{k-1}-\gamma_k \grad F(x_{k-1}))$, are in the class of PGD algorithms. 
Indeed, PGD becomes gradient descent when $P(x)=0$ and becomes projection gradient descent when $P(x)$ is the indicator function. If $F(x)$ and $P(x)$ are general convex functions and $F(x)$ is $L$-Lipschitz smooth, then PGD has the convergence rate $O\bracket{{1}/{k}}$.
However, this convergence rate is not optimal.
Nesterov, for the first time in \cite{Nesterov1983}, proposed an acceleration method for solving \eqref{eq:composite} with $P(x)$ being an indicator function and achieved the optimal convergence rate $ O\bracket{{1}/{k^2}}$.
Later in \cite{Nesterov1998,Nesterov,Nesterov2013}, he introduced two other acceleration techniques, which make one or two proximal calls together with interpolation at each iteration to accelerate the convergence. Nesterov's ideas have been further studied and applied for solving many practical optimization problems such as rank reduction in multivariate linear regression and sparse covariance selection (see \cite{Becker2011,Aspremont2008} and reference therein). Auslender and Teboulle \cite{Auslender2006} used the acceleration technique in the context of Bregman divergence $D(x,y)$, which generalizes the squared Euclidean distance $\frac12\norm{x-y}^2$.
Tseng \cite{Tseng} unified the analysis of all these acceleration techniques, proposed new variants and gave simple analysis for the proof of the optimal convergence rate.

When $n$ is very large, applying PGD can be unappealing since computing the full gradient $\grad F(x_{k-1})=\frac{1}{n}\sum_{i=1}^n \grad f_i(x_{k-1})$ in each iteration is very expensive. An effective alternative is the randomized version of PGD which is usually called stochastic proximal gradient descend (SPGD) method:
\[
x_k=\argmin\limits_{x} \left\{ \iprod{\grad f_{i_k}(x_{k-1})}{x} + \frac{1}{2\gamma_k}\norm{x-x_{k-1}}^2 + P(x) \right\},
\]
where $i_k$ is uniformly drawn from $\{1,\ldots,n\}$ at each iteration. For a suitably chosen decreasing step size $\gamma_k$, SPGD was proven to have the suboptimal rate $O({1}/{k})$ in the case of strongly convex $F^P(x)$ \cite{Nemirovski_etal}. Many authors have proposed methods to obtain better convergence rate when $F^P(x)$ is strongly convex; stochastic average gradient (SAG) \cite{Rouxetal}, stochastic variance reduced gradient (SVRG) \cite{Rie}, proximal SVRG \cite{Xiao_Zhang}, and SARAH \cite{Lam2017} are noticeable examples that have a linear convergence rate, which is optimal for this case.

There have been very few algorithms that directly support the non-strongly convex case. One of those is the randomized coordinate gradient descent method (RCGD), which recently has been successfully extended to accelerated versions to achieve the optimal rate $O({1}/{k^2})$ \cite{Fercoq}. However, accelerated RCGD is only applicable to block separable regularization $P(x)$, i.e., $P(x)=\sum P_i(x^{(i)})$ where $x^{(i)}$ and $P_i$ are correspondingly the $i$-th coordinate block of $x$ and $P$. It is worth to mention Catalyst scheme which accelerates first-order methods to achieve a better rate \cite[Alg.~1]{Lin2015}. However, Catalyst approximately solves a sequence of auxiliary problems which are formed by adding a strongly convex regularizer to $F^P(x)$. Therefore, we cannot consider Catalyst as a direct method for solving the non-strongly convex problems. Furthermore, using this scheme for first order method to solve non-strongly convex problems only obtain near-optimal rate \cite[Sect.~3.2]{Lin2015}. To the best of our knowledge, accelerated proximal gradient descent (APG) is the only algorithm that obtains the optimal rate $O({1}/{k^2})$ for directly solving \eqref{eq:composite} under Assumption \ref{asstn:convexsmooth}. Therefore, the main goal of this work is to extend APG to randomized variants that support the non-strongly convex case and even outperform APG on large scale problems.

Our second goal is to solve \eqref{eq:composite} when proximal points with respect to $P(x)$ {\em cannot be computed explicitly}.
For several choices of $P(x)$, the proximal points used in the above-mentioned algorithms can be calculated efficiently, e.g., when $P(x)=\norm{x}_1$, the proximal points are explicitly computed by a soft threshold operator \cite{Boyd}. However, in many cases such as nuclear norm regularization and total variation regularization, it is very expensive to compute them exactly. For that reason, many efficient methods have been proposed to calculate proximal points inexactly \cite{Cai,Fadili,Ma2011}. Basic methods that allow inexact computation of proximal points were first studied by Rockafellar \cite{Rock}. Since then, there has emerged a growing interest in both inexact proximal point and inexact accelerated proximal point algorithms \cite{Cai,Devolder2014,Schmidt2011,Solodov2000,Villa}.
However, although there were many work showing impressive empirical performance of inexact (accelerated) PGD methods, there has been no analysis on their randomized versions. Our work gives such an analysis.

In a concurrent work \cite{AllenZhu}, an \emph{exact} accelerated randomized gradient descent in the setting with Euclidean distance was independently analyzed and the same convergence rate was proven. In comparison, as we extend the general deterministic acceleration framework of Tseng \cite{Tseng} and consider non-uniform sampling together with a broader choice of involving parameters (i.e., $\alpha_{1,s},\alpha_{2,s}$ in \eqref{eq:alphaupdate}), our analysis for \emph{inexact} accelerated randomized mirror descent algorithm is put in a more general framework but employs simpler and neater proofs in the setting with Bregman distance. By contrast, the analysis of \cite{AllenZhu} depends critically on a specific choice of $x_{k,s}$ in Update \eqref{eq:choosexk} of our Algorithm \ref{alg:ASM}. In particular, their proofs critically rely on using Variant 1 of our Example \ref{ex:xksupdate} to update $x_{k,s}$. 

Our main contribution is the incorporation of the variance reduction technique and the general acceleration methods of Tseng to propose a framework of exact as well as inexact accelerated randomized mirror descent (ARMD and inexact ARMD, respectively) algorithms for the non-strongly convex optimization problem \eqref{eq:composite}. At each stage of our inexact algorithms, proximal points are allowed to be calculated inexactly. When the component functions $f_i$ are non-smooth, we give a scheme for minimizing the corresponding non-smooth problem. The rate obtained using our smoothing scheme significantly improves the rate obtained using subgradient methods or stochastic subgradient methods.

\textbf{Paper structure}. Sect~\ref{sec:pre}  provides some preliminaries. The proposed ARMD and inexact ARMD methods and their convergence analysis are presented in Sect.~\ref{sec:iASMD}.  Computational results are given in Sect.~\ref{sec:experiment}, and proofs are in the Appendix.

\section{Preliminaries} \label{sec:pre}
For a given continuous function $f(x)$, a convex set $X$ and a non-negative number $\varepsilon$, we write $z\approx_\varepsilon \argmin\limits_{x\in X} \{f(x)+P(x) \}$ to denote $z\in \domain(P) $ such that $f(z) + P(z)\leq  \min\limits_{x\in X}\{ f(x) + P(x)\} + \varepsilon$. We use $\grad_1 g(x,y)$ to denote the gradient of the function $x\mapsto g(x,y)$ at $x$. 
We now give some important definitions and lemmas that will be used in the paper.
\begin{definition} 
Let $h:\Omega\to \mathbb{R}$ be a strictly convex function that is differentiable on an open set containing $\domain(P)$. The Bregman distance is defined as: \\
${\hskip 1.5cm} D(x,y)=h(x)-h(y)-\iprod{\grad h(y)}{x-y}, \quad \forall y\in \domain(P), x\in \Omega.$
\end{definition}
\begin{example} 
(a) If $h(x)=\frac12\norm{x}_2^2$, then $D(x,y)=\frac12 \norm{x-y}_2^2$ is the Euclidean distance. \\
(b) If $\Omega=\{x\in\mathbb{R}^p_+:\sum_{i=1}^p x_i=1\}$ and $h(x)=\sum_{i=1}^p x_i \log x_i$, then $D(x,y)=\sum_{i=1}^p x_i \log\frac{x_i}{y_i}$ is called the entropy distance.
\end{example}
\begin{lemma}
\label{lem:DgeqE} 
If $h$ is strongly convex with constant $\sigma$, i.e., 
$h(y)\geq h(x) + \iprod{\grad h(x)}{y-x}+\frac{\sigma}{2}\norm{x-y}^2,$
then the corresponding Bregman distance satisfies
$D(x,y)\geq \frac{\sigma}{2}\norm{x-y}^2.$
\end{lemma}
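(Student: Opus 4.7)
The plan is to apply the strong convexity inequality directly, but with the roles of the two points swapped from the way the hypothesis is written.

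The strong convexity assumption as stated reads
\[
h(y)\geq h(x) + \iprod{\grad h(x)}{y-x}+\frac{\sigma}{2}\norm{x-y}^2,
\]
but since this must hold for \emph{all} $x,y\in\domain(P)$, I can exchange the names of the two variables to obtain
\[
h(x)\geq h(y) + \iprod{\grad h(y)}{x-y}+\frac{\sigma}{2}\norm{x-y}^2.
\]
Rearranging the terms on the right puts this in exactly the shape of the Bregman-distance definition: the left-hand side minus the first two terms on the right is $h(x)-h(y)-\iprod{\grad h(y)}{x-y}=D(x,y)$, and we are left with $D(x,y)\geq \tfrac{\sigma}{2}\norm{x-y}^2$, which is the claim.

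There is no real obstacle here; the only thing to be careful about is that the quantifier in the definition of strong convexity runs over both arguments, so the swap is legitimate, and that $\grad h(y)$ is well-defined because the definition of $D$ already restricts $y$ to $\domain(P)$, where $h$ is assumed differentiable. The lemma is essentially a restatement of strong convexity through the Bregman-distance formalism, so the proof is a one-line calculation.
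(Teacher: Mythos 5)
Your proof is correct: swapping the roles of $x$ and $y$ in the strong convexity inequality and rearranging gives exactly $D(x,y)=h(x)-h(y)-\iprod{\grad h(y)}{x-y}\geq \frac{\sigma}{2}\norm{x-y}^2$, and your remark about the quantifier and the differentiability of $h$ at $y$ is the right thing to check. The paper itself does not prove this lemma (it defers to references), but your one-line argument is the standard one and there is nothing to add.
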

\begin{lemma}
\label{lem:Dinequal}
Let $\phi(x)$ be a proper convex function whose domain is an open set containing $C$. For any $z\in \domain(P)$, if $z^* = \argmin\limits_{x\in C} \{ \phi(x) + D(x,z)\}$, then for all $x\in \domain (P)$ we have:
$\phi(x) + D(x,z) \geq \phi(z^*) + D(z^*,z) + D(x,z^*).$
\end{lemma}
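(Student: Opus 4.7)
The plan is to derive this inequality by combining three standard ingredients: the first-order optimality condition at $z^{\ast}$, the subgradient inequality for $\phi$, and the so-called three-point identity for Bregman distances. Since the result is the workhorse inequality for any proximal/mirror-descent analysis, I would prove it in the most direct way possible.

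First I would write down the optimality condition. Because $z^{\ast}$ minimizes $\phi(x)+D(x,z)$ over $C$ and $z^{\ast}$ lies in the open set on which $\phi$ is defined (so that $\phi$ is subdifferentiable there), there exists $g\in\partial\phi(z^{\ast})$ such that
\[
\iprod{g + \grad h(z^{\ast}) - \grad h(z)}{x-z^{\ast}} \;\geq\; 0 \qquad\forall\, x\in C,
\]
using that $\grad_1 D(\cdot,z)(z^{\ast}) = \grad h(z^{\ast}) - \grad h(z)$. By convexity of $\phi$, I also have
\[
\phi(x) \;\geq\; \phi(z^{\ast}) + \iprod{g}{x-z^{\ast}}.
\]
Adding these two inequalities eliminates $g$ and yields
\[
\phi(x) \;\geq\; \phi(z^{\ast}) - \iprod{\grad h(z^{\ast}) - \grad h(z)}{x-z^{\ast}}.
\]

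Next I would verify the three-point identity
\[
D(x,z) - D(x,z^{\ast}) - D(z^{\ast},z) \;=\; \iprod{\grad h(z^{\ast}) - \grad h(z)}{x-z^{\ast}},
\]
which follows by expanding each Bregman distance via its definition $D(u,v)=h(u)-h(v)-\iprod{\grad h(v)}{u-v}$ and canceling the $h(x)$ terms. Substituting this identity into the previous display rearranges immediately to
\[
\phi(x) + D(x,z) \;\geq\; \phi(z^{\ast}) + D(z^{\ast},z) + D(x,z^{\ast}),
\]
which is the conclusion.

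I do not anticipate a real obstacle: the only subtlety is justifying that $\partial\phi(z^{\ast})$ is nonempty, which is standard because $\phi$ is a proper convex function whose domain is an open set containing $z^{\ast}\in C$. I would only need to be slightly careful about extending the inequality from $x\in C$ (where the optimality condition applies) to all $x\in\domain P$, as required by the statement; this is handled by noting that the three-point identity and the subgradient inequality are purely algebraic, so once the optimality direction $\grad h(z^{\ast})-\grad h(z)$ is used, the resulting bound holds for every $x\in\domain P$ for which both sides are well defined.
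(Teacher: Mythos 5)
The paper never proves this lemma itself---it defers to the cited references on Bregman distances---so there is no internal proof to compare against; your argument is precisely the standard one found in those references: the variational inequality at $z^*$, the subgradient inequality for $\phi$, and the three-point identity. Note that the identity you re-derive is exactly Lemma \ref{lem:triangle} of the paper specialized to $y=z^*$, so you could simply have invoked it. The algebra in all three steps and in the final assembly is correct, and your remark about $\partial\phi(z^*)\neq\emptyset$ is the right justification for the optimality condition.

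The one genuine gap is your closing paragraph. The variational inequality $\iprod{g+\grad h(z^*)-\grad h(z)}{x-z^*}\geq 0$ is the normal-cone condition for a minimum constrained to $C$ and holds \emph{only} for $x\in C$; it is not ``purely algebraic,'' and the conclusion really does fail outside $C$. Take $h(x)=\frac12 x^2$, $\phi\equiv 0$, $C=[1,2]$, $z=0$: then $z^*=1$, and at $x=0$ the claimed inequality reads $0\geq 1$. So your argument proves the inequality for all $x\in C$ (and trivially wherever $\phi(x)=+\infty$), not for all $x\in\domain P$ as the statement asserts. This is arguably a defect in the lemma's phrasing rather than in your proof---the paper only ever applies the lemma with $C=X_s$ and $x\in X_s\cap\domain(P)$ (see Lemma \ref{lem:approx})---but your attempted justification of the extension is wrong as written and should be replaced by the explicit hypothesis $x\in C$ (or the assumption $\domain(P)\subseteq C$).
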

\begin{lemma}
\label{lem:triangle}
Let $D(x,y)$ be the Bregman distance with respect to $h$. We have: \\
${\hskip 1.7cm} D(x,y)+D(y,z)=D(x,z)+\iprod{x-y}{\grad h(z)-\grad h(y)}.$
\end{lemma}
If we replace the Euclidean distance in PGD by Bregman distance, we obtain the mirror descent method:
\[ 
x_{k}=\argmin\limits_{x} \left\{ \iprod{\grad F(x_{k-1})}{x} + P(x)+ \frac{1}{\gamma_k}D(x,x_{k-1}) \right\}. 
\]
We refer the readers to \cite{Bregman1967,Teboulle1997,Tseng} and references therein for proofs of these lemmas and applications of Bregman distance as well as mirror descent methods. Since we can scale $h$ if necessary, we assume $h$ is strongly convex with constant $\sigma=1$ in this paper. The following lemma for $L$-Lipschitz smooth functions (with proof in \cite{Nesterov2004}) is crucial for our analysis.

\begin{lemma}
\label{lem:LiGrad}
If $f_i(\cdot)$ is a convex and $L_i$-Lipschitz smooth function, then: \\
(1) \quad $f_i(x) \leq f_i(y) + \iprod{\grad f_i(y)}{x-y} + \frac{L_i}{2}\norm{x-y}^2$, and \\ 
(2) \quad $\frac{1}{2L_i}\norm{\grad f_i(y)-\grad f_i(x)}_*^2 + \iprod{\grad f_i(y)}{x-y} \leq f_i(x)-f_i(y).$
\end{lemma}

\section{Accelerated Randomized Mirror Descent Methods} 
\label{sec:iASMD}

\subsection{Algorithm Description}

\begin{algorithm}[!t]
\caption{Accelerated Randomized Mirror Descent (ARMD) Algorithm}
\label{alg:ASM}
\begin{algorithmic}[1]
\State Choose $\tilde{x}_0,x_{1,0},\ldots, x_{m,0}, z_{0} \in \domain (P)$, $\alpha_3>0$ and non-negative sequences $\left\{\alpha_{1,s}\right\},\left\{\alpha_{2,s}\right\}$ such that:
\begin{equation} 
\label{eq:alphaupdate}
\begin{cases}
\frac{1-\alpha_{1,s}}{\alpha_{2,s}^2} \geq \frac{\alpha_{3}}{\alpha_{2,s+1}^2}\\
\frac{1}{\alpha_{2,s}^2} \geq \frac{1-\alpha_{2,s+1}}{\alpha_{2,s+1}^2}\\
\alpha_{1,s}+\alpha_{2,s}+\alpha_{3}=1.
\end{cases}
\end{equation}
\State Choose probability $Q=\{q_1,\ldots,q_n\}$ on $\{1,\ldots,n\}$.
\State Denote $L_Q=\max_i \frac{L_i}{q_i n}, L_A=\frac{1}{n}\sum_{i=1}^n L_i$. Let $\overline{L}= L_A+\frac{4L_Q}{\alpha_{3}}$ and $z_{m,0}=z_0$.
\For{$s=1,2,\ldots $}
\State Calculate $\tilde{v}=\nabla F (\tilde{x}_{s-1})$.
\State Let $x_{0,s}=x_{m,s-1}$ and $z_{0,s} = z_{m,s-1}$.
\State Choose a nonempty closed convex set $X_s \subseteq \Espace$ with $X_s \cap \domain (P) \ne \emptyset$.
\For{$k=1,\ldots,m$ }
\State - Pick $i_k\in \{1,\ldots,n \}$ randomly according to $Q$.
\State - Set $y_{k,s}=\alpha_{1,s} x_{k-1,s} + \alpha_{2,s} z_{k-1,s} + \alpha_3 \tilde{x}_{s-1}.$
\State - Set $v_{k}=\tilde{v}+\frac{\nabla f_{i_k} (y_{k,s}) - \nabla f_{i_k} (\tilde{x}_{s-1})}{q_{i_k}n}.$
\State - Let $\theta_s=\alpha_{2,s} \overline{L}$, update:
\begin{equation} 
\label{eq:proxpoint}
z_{k,s} \approx_{\varepsilon_{k,s}} \argmin\limits_{x\in X_s}\left\{ \iprod{v_k}{x} + P(x) + \theta_s  D(x,z_{k-1,s}) \right\}.
\end{equation}
\State - Set $\hat{x}_{k,s} = \alpha_{1,s} x_{k-1,s} + \alpha_{2,s} z_{k,s} + \alpha_3 \tilde{x}_{s-1}$.
\State - Choose $x_{k,s}$ such that:
\begin{equation} \label{eq:choosexk}
\iprod{v_k }{x_{k,s}} + \frac{ \overline{L}}{2}
\norm{x_{k,s}-y_{k,s}}^2 + P(x_{k,s})
\leq \iprod{v_k }{\hat{x}_{k,s}} + \frac{ \overline{L}}{2}
\norm{\hat{x}_{k,s}-y_{k,s}}^2 + P(\hat{x}_{k,s}).
\end{equation}
\EndFor
\State Update $\tilde{x}_s$ such that:
\begin{equation}\label{eq:xsupdate}
F^P(\tilde{x}_s) \leq \frac{1}{m}\sum\limits_{i=1}^m F^P(x_{i,s}).
\end{equation}
\EndFor
\end{algorithmic}
\end{algorithm}

Algorithm \ref{alg:ASM} details our framework. The algorithm has 2 loops - the outer loop indexed by $s$ and the inner loop indexed by $k$. Specifically, we use $x_{k,s}$ to mean that the point is at step $k$ of the inner loop, which belongs to stage $s$ of the outer loop. Before running each inner loop, a full gradient $\nabla F(\cdot)$ is calculated. Each inner loop is executed $m$ steps, i.e., $k=1,\ldots,m$. At step $k$ of an inner loop, under non-uniform sampling setting, we randomly pick one function $f_{i_k}$ to calculate its derivative $\nabla f_{i_k}(\cdot)$, then find the (inexact) proximal point in \eqref{eq:proxpoint} and perform the update \eqref{eq:choosexk}. We let $\varepsilon_{k,s}$ be the error in calculating the proximal points in \eqref{eq:proxpoint}. When $\varepsilon_{k,s}=0$, our algorithm reduces to exact ARMD. The non-uniform sampling method would improve the complexity of our algorithms when $L_i$ are different. The set $X_s$ should be chosen such that it contains a solution of \eqref{eq:composite} (see Proposition \ref{prop:result}). Choosing $X_s=\Espace$ is the simplest variant of $X_s$. To accelerate convergence, we can always choose a smaller $X_s$. We refer the readers to \cite[Sect.~3]{Tseng} for examples of choosing smaller $X_s$ and omit the details here.

We stress that the update rule \eqref{eq:choosexk} of $x_{k,s}$ is very general, and as such we can derive many specific algorithms from the general framework \ref{alg:ASM}. In particular, we give some examples that satisfy \eqref{eq:choosexk} below.

\begin{example}
\label{ex:xksupdate}
\begin{enumerate}
\item $x_{k,s}=x_{k,s}^u$ with $x_{k,s}^u=\hat{x}_{k,s}$. For this choice, each step $k$ of an inner loop only computes one inexact/exact proximal point $z_{k,s}$. 
\item $x_{k,s}=x_{k,s}^l$ with $x_{k,s}^l=\argmin\limits_{x\in X_s}\left\{\iprod{v_k }{x} + \frac{\overline{L}}{2}\norm{x-y_{k,s}}^2 + P(x)\right\}$. For this choice, each step $k$ of an inner loop needs to compute two proximal points $z_{k,s}$ and $x_{k,s}$.
\item Let $x^1$, $x^2$ be two variants of $x_{k,s}$ that satisfy \eqref{eq:choosexk} then their convex combination $\lambda x^1 + (1-\lambda) x^2$, where $ 0\leq \lambda\leq 1$, is also a choice of $x_{k,s}$.
\end{enumerate} 
\end{example}

It is easy to check that setting $\alpha_{1,s}=1-\alpha_3 - \frac{2}{s+\nu},~ \alpha_{2,s}=\frac{2}{s+\nu},~ {0<\alpha_{3}\leq \frac{\nu-1}{\nu+1}}$, where $\nu\geq 2$, satisfies the condition \eqref{eq:alphaupdate}.
For the update rule \eqref{eq:xsupdate}, we can choose $\tilde{x}_s=\frac{1}{m}\sum_{i=1}^m x_{i,s}$ or $\tilde{x}_s=x_{i^*,s}$, where $i^*=\argmin\limits_{i}F^P(x_{i,s}).$

\subsection{Convergence analysis}

We first give an upper bound for the variance of $v_k$ in Lemma \ref{lem:variance}.
This lemma together with the inequalities in Lemmas \ref{lem:bridge} and \ref{lem:approx} are then used to prove the upcoming Proposition \ref{prop:root}, which provides a recursive inequality within $m$ steps of an inner loop of Algorithm \ref{alg:ASM}.

\begin{lemma}
\label{lem:variance} Conditioned on $x_{k-1,s}$, we have the following expectation inequality with respect to $i_k$:
\[
\expect \norm{\nabla F(y_{k,s}) - v_k}_*^2 \leq 2 \expect\frac{1}{(nq_{i_k})^2}\norm{\grad f_{i_k}(y_{k,s}) -\grad f_{i_k}(\tilde{x}_{s-1})}_*^2+2\norm{\grad F(y_{k,s})-\grad F(\tilde{x}_{s-1}) }_*^2.
\]
\end{lemma}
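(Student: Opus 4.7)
The plan is to show that $v_k$ is an unbiased estimator of $\nabla F(y_{k,s})$ conditioned on $x_{k-1,s}$, and then invoke the elementary identity $\expect\|X-\expect X\|^2 = \expect\|X\|^2 - \|\expect X\|^2$, which immediately implies $\expect\|X-\expect X\|^2 \leq \expect\|X\|^2$.

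First I would set $X = \frac{\grad f_{i_k}(y_{k,s}) - \grad f_{i_k}(\tilde{x}_{s-1})}{q_{i_k} n}$, so that $v_k = \tilde{v} + X$ with $\tilde{v} = \grad F(\tilde{x}_{s-1})$. Since $i_k$ is drawn from $\{1,\ldots,n\}$ according to $Q$, we have
\begin{equation*}
\expect[X] = \sum_{i=1}^n q_i \cdot \frac{\grad f_i(y_{k,s})-\grad f_i(\tilde{x}_{s-1})}{q_i n} = \frac{1}{n}\sum_{i=1}^n\bigl(\grad f_i(y_{k,s})-\grad f_i(\tilde{x}_{s-1})\bigr) = \grad F(y_{k,s}) - \tilde{v}.
\end{equation*}
Hence $\expect[v_k] = \grad F(y_{k,s})$, so $v_k - \grad F(y_{k,s}) = X - \expect[X]$.

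Then I would apply the standard variance bound
\begin{equation*}
\expect\norm{X - \expect X}^2 = \expect\norm{X}^2 - \norm{\expect X}^2 \leq \expect\norm{X}^2,
\end{equation*}
which gives exactly
\begin{equation*}
\expect\norm{\grad F(y_{k,s}) - v_k}^2 \leq \expect\norm{X}^2 = \expect\frac{1}{(n q_{i_k})^2}\norm{\grad f_{i_k}(y_{k,s}) - \grad f_{i_k}(\tilde{x}_{s-1})}^2.
\end{equation*}

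There is no real obstacle here: the computation is purely mechanical once one recognizes that $v_k$ is an unbiased estimator of $\grad F(y_{k,s})$ under non-uniform sampling with inverse-probability weights $\tfrac{1}{q_{i_k} n}$, and the lemma simply drops the (nonnegative) term $\norm{\expect X}^2 = \norm{\grad F(y_{k,s})-\tilde{v}}^2$ from the variance identity. The only point worth noting in the write-up is that the expectation is conditional on $x_{k-1,s}$ (and hence on $y_{k,s}$ and $\tilde{x}_{s-1}$), so these vectors are treated as deterministic and the only source of randomness is $i_k \sim Q$.
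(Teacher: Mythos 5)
Your proof is correct and follows essentially the same route as the paper's: both identify $\grad F(y_{k,s}) - v_k$ as the deviation of the importance-weighted gradient difference from its mean and then apply the identity $\expect\norm{Z-\expect Z}^2 = \expect\norm{Z}^2 - \norm{\expect Z}^2$, dropping the nonnegative term $\norm{\grad F(y_{k,s})-\grad F(\tilde{x}_{s-1})}^2$. Your write-up is merely a bit more explicit about verifying unbiasedness under the non-uniform sampling.
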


\begin{lemma}
\label{lem:bridge}
At each stage $s$ of the outer loop, the following inequality holds:
\[
\begin{split}
F^P(x_{k,s}) &\leq F(y_{k,s}) + \frac{\alpha_3}{8 L_Q}\norm{\grad F(y_{k,s}) -v_k}_*^2 \\
&\quad + \alpha_{2,s}\bracket{\iprod{v_k}{z_{k,s}-z_{k-1,s}}+\theta_s D(z_{k,s},z_{k-1,s})} 
 + P(\hat{x}_{k,s}).
\end{split}
\]
\end{lemma}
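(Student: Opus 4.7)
The plan is to start from the descent inequality of Lemma \ref{lem:LiGrad}(1) applied to the average $F$, then use the condition \eqref{eq:choosexk} on $x_{k,s}$ to swap $x_{k,s}$ out for $\hat{x}_{k,s}$, and finally convert the resulting Euclidean squared norm into a Bregman distance via Lemma \ref{lem:DgeqE}.

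First, observe that $F=\frac{1}{n}\sum f_i$ is Lipschitz smooth with constant $L_A=\frac{1}{n}\sum L_i$, so Lemma \ref{lem:LiGrad}(1) yields
\[
F(x_{k,s}) \leq F(y_{k,s}) + \iprod{\grad F(y_{k,s})}{x_{k,s}-y_{k,s}} + \tfrac{L_A}{2}\norm{x_{k,s}-y_{k,s}}^2.
\]
I would then write $\grad F(y_{k,s}) = v_k + (\grad F(y_{k,s})-v_k)$ and apply Young's inequality to the cross term in the form
\[
\iprod{\grad F(y_{k,s})-v_k}{x_{k,s}-y_{k,s}} \leq \tfrac{\alpha_3}{2L_Q}\norm{\grad F(y_{k,s})-v_k}^2 + \tfrac{L_Q}{2\alpha_3}\norm{x_{k,s}-y_{k,s}}^2.
\]
The weighting $\alpha_3/L_Q$ is chosen precisely so that $L_A + L_Q/\alpha_3 = \overline{L}$, which is the key algebraic matching step. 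Adding $P(x_{k,s})$ to both sides produces
\[
f^P(x_{k,s}) \leq F(y_{k,s}) + \iprod{v_k}{x_{k,s}-y_{k,s}} + \tfrac{\alpha_3}{2L_Q}\norm{\grad F(y_{k,s})-v_k}^2 + \tfrac{\overline{L}}{2}\norm{x_{k,s}-y_{k,s}}^2 + P(x_{k,s}).
\]

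Next I would invoke \eqref{eq:choosexk} to replace the triple $\iprod{v_k}{x_{k,s}}+\tfrac{\overline{L}}{2}\norm{x_{k,s}-y_{k,s}}^2+P(x_{k,s})$ by the same expression evaluated at $\hat{x}_{k,s}$, which gives
\[
f^P(x_{k,s}) \leq F(y_{k,s}) + \tfrac{\alpha_3}{2L_Q}\norm{\grad F(y_{k,s})-v_k}^2 + \iprod{v_k}{\hat{x}_{k,s}-y_{k,s}} + \tfrac{\overline{L}}{2}\norm{\hat{x}_{k,s}-y_{k,s}}^2 + P(\hat{x}_{k,s}).
\]
Now the definitions of $y_{k,s}$ and $\hat{x}_{k,s}$ give the identity $\hat{x}_{k,s}-y_{k,s}=\alpha_{2,s}(z_{k,s}-z_{k-1,s})$, so the linear term becomes $\alpha_{2,s}\iprod{v_k}{z_{k,s}-z_{k-1,s}}$ and the quadratic term becomes $\tfrac{\alpha_{2,s}^2\overline{L}}{2}\norm{z_{k,s}-z_{k-1,s}}^2$. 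Finally, Lemma \ref{lem:DgeqE} with $\sigma=1$ bounds the latter by $\alpha_{2,s}^2\,\overline{L}\,D(z_{k,s},z_{k-1,s}) = \alpha_{2,s}\theta_s D(z_{k,s},z_{k-1,s})$, and factoring $\alpha_{2,s}$ out of the two remaining $z$-terms yields exactly the claimed inequality.

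The only nontrivial step is the Young's inequality balancing, since the coefficients $\alpha_3/L_Q$ versus $L_Q/\alpha_3$ must line up with the definition $\overline{L}=L_A+L_Q/\alpha_3$; everything else is a direct substitution using the update rules and the fundamental Bregman inequality. Note also that this lemma is purely deterministic given $i_k$ (it does not require taking an expectation), so the inexactness $\varepsilon_s$ of $z_{k,s}$ plays no role here and is deferred to the recursion in Proposition \ref{prop:root}.
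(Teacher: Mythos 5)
Your proposal is correct and follows essentially the same route as the paper's proof: descent lemma at $y_{k,s}$ with constant $L_A$ (the paper applies it componentwise to each $f_i$ and averages, which is equivalent), the weighted Young's inequality producing the $\frac{\alpha_3}{2L_Q}$ and $\frac{L_Q}{2\alpha_3}$ terms that combine with $L_A$ into $\overline{L}$, the swap to $\hat{x}_{k,s}$ via \eqref{eq:choosexk}, the identity $\hat{x}_{k,s}-y_{k,s}=\alpha_{2,s}(z_{k,s}-z_{k-1,s})$, and Lemma \ref{lem:DgeqE}. Your closing remark that the bound is deterministic and independent of $\varepsilon_s$ is also accurate.
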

We denote $\bar{z}_{k,s}=\argmin\limits_{x\in X_s} \{\iprod{v_k}{x} + P(x)+\theta_s D(x,z_{k-1,s})\}$ in \eqref{eq:proxpoint}, i.e., $\bar{z}_{k,s}$ is the exact proximal point of the randomized step $k$ at stage $s$.
\begin{lemma}
\label{lem:approx}
 For all ${ x\in X_s \cap \domain (P) }$,
\[
\begin{split}
\iprod{v_k}{z_{k,s}}+\theta_s D(z_{k,s},z_{k-1,s})&\leq \iprod{v_k}{x} + P(x)+\theta_s(D(x,z_{k-1,s})-D(x,z_{k,s}))-P(z_{k,s})+\varepsilon_{k,s} \\
&\quad-\theta_s( D(z_{k,s},\bar{z}_{k,s})-\iprod{x-z_{k,s}}{\grad h(\bar{z}_{k,s})-\grad h(z_{k,s})}). 
\end{split}
\]
\end{lemma}
\begin{proposition}
\label{prop:root}
Denote $r_{k,s}=\expect [ \alpha_{2,s}\theta_s(-D(z_{k,s},\bar{z}_{k,s})+ \iprod{x-z_{k,s}}{\grad h(\bar{z}_{k,s})-\grad h(z_{k,s})}) +\alpha_{2,s}\varepsilon_{k,s}].$
For any $x\in X_s \cap \domain(P)$, if $F^P(x)\leq F^P(x_{k,s})$, then we have:
\[
\begin{split}
\expect F^P(x_{k,s}) &\leq \alpha_{1,s} \expect F^P(x_{k-1,s}) + \alpha_{2,s} F^P(x) + \alpha_{3} F^P(\tilde{x}_{s-1}) \\
&\quad +\alpha_{2,s}^2\overline{L}( \expect D(x,z_{k-1,s}) - \expect D(x,z_{k,s})) + r_{k,s}.
\end{split}
\]
\end{proposition}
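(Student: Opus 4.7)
The plan is to start from the bridge inequality of Lemma~\ref{lem:bridge}, inject the inexact-proximal bound of Lemma~\ref{lem:approx}, absorb the $P$-terms by convexity of $P$, and then take a conditional expectation with respect to $i_k$ so that the stochastic bias disappears while the variance is paid for by the smoothness inequality of Lemma~\ref{lem:LiGrad}(2).

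First, apply Lemma~\ref{lem:approx} at the test point $x\in X_s\cap\domain(P)$ and subtract $\iprod{v_k}{z_{k-1,s}}$ from both sides; after multiplying by $\alpha_{2,s}$, the bracket $\alpha_{2,s}(\iprod{v_k}{z_{k,s}-z_{k-1,s}}+\theta_s D(z_{k,s},z_{k-1,s}))$ appearing in Lemma~\ref{lem:bridge} is replaced by $\alpha_{2,s}\iprod{v_k}{x-z_{k-1,s}}+\alpha_{2,s}P(x)+\alpha_{2,s}\theta_s(D(x,z_{k-1,s})-D(x,z_{k,s}))-\alpha_{2,s}P(z_{k,s})+\alpha_{2,s}\varepsilon_s+R_{k,s}$, where $R_{k,s}$ collects the two Bregman-error terms that will make up $r_{k,s}$. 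Bounding $P(\hat{x}_{k,s})$ by the convex combination $\alpha_{1,s}P(x_{k-1,s})+\alpha_{2,s}P(z_{k,s})+\alpha_3 P(\tilde{x}_{s-1})$ makes the $\pm\alpha_{2,s}P(z_{k,s})$ cancel.

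Next, I would use the identity $\alpha_{2,s}(x-z_{k-1,s})=\alpha_{2,s}(x-y_{k,s})+\alpha_{1,s}(x_{k-1,s}-y_{k,s})+\alpha_3(\tilde{x}_{s-1}-y_{k,s})$, which follows from the definition of $y_{k,s}$ and $\alpha_{1,s}+\alpha_{2,s}+\alpha_3=1$. Taking the conditional expectation over $i_k$, for which $\expect[v_k]=\grad F(y_{k,s})$, convexity of $F$ bounds the first two summands by $\alpha_{2,s}(F(x)-F(y_{k,s}))$ and $\alpha_{1,s}(F(x_{k-1,s})-F(y_{k,s}))$, while the $\tilde{x}_{s-1}$-summand is left untouched as $\alpha_3\iprod{\grad F(y_{k,s})}{\tilde{x}_{s-1}-y_{k,s}}$. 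For the variance term, Lemma~\ref{lem:variance}, the non-uniform sampling inequality $\frac{1}{n^2 q_i}\le\frac{L_Q}{nL_i}$, and Lemma~\ref{lem:LiGrad}(2) applied to every $f_i$ together yield $\frac{\alpha_3}{2L_Q}\expect\norm{\grad F(y_{k,s})-v_k}^2\le\alpha_3(F(\tilde{x}_{s-1})-F(y_{k,s})-\iprod{\grad F(y_{k,s})}{\tilde{x}_{s-1}-y_{k,s}})$, whose linear term precisely cancels the one that was set aside.

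Collecting the surviving $F$-contributions (including the leading $F(y_{k,s})$ from Lemma~\ref{lem:bridge}) and invoking $\alpha_{1,s}+\alpha_{2,s}+\alpha_3=1$ produces $\alpha_{1,s}F(x_{k-1,s})+\alpha_{2,s}F(x)+\alpha_3 F(\tilde{x}_{s-1})$; merging with the $P$-contributions gives the target convex combination $\alpha_{1,s}f^P(x_{k-1,s})+\alpha_{2,s}f^P(x)+\alpha_3 f^P(\tilde{x}_{s-1})$, while the relation $\alpha_{2,s}\theta_s=\alpha_{2,s}^2\overline{L}$ furnishes the Bregman-telescoping coefficient and the remaining errors assemble into $r_{k,s}$. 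The main obstacle is the bookkeeping that precedes this: one must resist bounding $\iprod{\grad F(y_{k,s})}{\tilde{x}_{s-1}-y_{k,s}}$ via convexity of $F$, because that linear piece is exactly what is annihilated by the variance estimate through smoothness and the non-uniform sampling weights; this coupling of acceleration (via the $\tilde{x}_{s-1}$ component of $y_{k,s}$) with variance reduction (via $\tilde{v}=\grad F(\tilde{x}_{s-1})$) is what forces the effective Lipschitz constant to appear as $\overline{L}=L_A+L_Q/\alpha_3$. The hypothesis $f^P(x)\le f^P(x_{k,s})$ plays no role in this derivation and will only be invoked when the inequality is iterated over $k$ (typically with $x=x^\ast$, where it is automatic).
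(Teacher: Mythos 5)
Your proposal is correct and follows essentially the same route as the paper's proof: Lemma~\ref{lem:bridge}, then Lemma~\ref{lem:approx} with convexity of $P$ to cancel the $P(z_{k,s})$ terms, then the conditional expectation over $i_k$ with the variance term paid for by Lemma~\ref{lem:variance}, the bound $\frac{1}{L_Q n q_i}\le \frac{1}{L_i}$, and Lemma~\ref{lem:LiGrad}(2), whose linear term cancels the deliberately reserved $\alpha_3\iprod{\grad F(y_{k,s})}{\tilde{x}_{s-1}-y_{k,s}}$. Your closing observations — that this cancellation is the crux and that the hypothesis $f^P(x)\le f^P(x_{k,s})$ is never actually invoked in the derivation — both match what happens in the paper's argument.
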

As $\tilde{x}_{s-1}$ appears in the recursive inequality, the acceleration effect works through the outer loop. The following proposition is a consequence of Proposition \ref{prop:root}. It leads to the optimal convergence rate with respect to $s$.  Theorem \ref{thrm:main} and Theorem \ref{thrm:main_inexact} indicate the rates.
\begin{proposition}\label{prop:result} 
Denote $d_{i,0}=F^P(x_{i,0}) - F^P(x^*)$ and ${ \tilde{d}_s=\expect (F^P(\tilde{x}_{s}) - F^P(x^*)) }$. Let $x^*$ be the optimal solution of \eqref{eq:composite}. Let ${r}^*_{k,s}$ be the value of $r_{k,s}$ at $x=x^*$. Suppose $x^*\in X_s$, then we have:
\[
\begin{split}
\tilde{d}_s&\leq \alpha_{2,s+1}^2\left(\frac{(1-\alpha_{2,1})d_{m,0}}{\alpha_{2,1}^2\alpha_{3}m} +\frac{1}{m\alpha_{2,1}^2}\sum\limits_{k=1}^{m-1}d_{k,0}\right.\\
&\qquad\qquad\left.+\frac{\overline{L}}{m\alpha_3}  \bracket{ \expect D(x^*,z_{m,0}) - \expect D(x^*,z_{m,s})}\right)+ \alpha_{2,s+1}^2\sum\limits_{i=1}^s\sum\limits_{k=1}^m \frac{{r}^*_{k,i}}{m\alpha_3\alpha_{2,i}^2} .
\end{split}
\]
\end{proposition}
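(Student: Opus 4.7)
The plan is to specialize Proposition \ref{prop:root} to $x = x^*$, sum over the inner loop, and then telescope across stages via a two-term potential tailored to the two conditions of \eqref{eq:alphaupdate}.

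Step 1 (per-stage inequality). Since $f^P(x^*)\leq f^P(x_{k,s})$ and $x^*\in X_s$ by assumption, subtracting $f^P(x^*)$ on both sides of Proposition \ref{prop:root} (using $\alpha_{1,s}+\alpha_{2,s}+\alpha_3=1$) gives
\begin{equation*}
d_{k,s} \leq \alpha_{1,s} d_{k-1,s} + \alpha_3 \tilde{d}_{s-1} + \alpha_{2,s}^2\overline{L}(D^*_{k-1,s}-D^*_{k,s}) + \bar{r}_{k,s},
\end{equation*}
where $D^*_{k,s} := \expect D(x^*,z_{k,s})$. Summing over $k=1,\ldots,m$ and using the hand-off identities $d_{0,s}=d_{m,s-1}$ and $D^*_{0,s}=D^*_{m,s-1}$ (from $x_{0,s}=x_{m,s-1}$, $z_{0,s}=z_{m,s-1}$), I obtain, with $U_s := \sum_{k=1}^{m-1} d_{k,s}$,
\begin{equation*}
(1-\alpha_{1,s})U_s + d_{m,s} \leq \alpha_{1,s}d_{m,s-1} + m\alpha_3\tilde{d}_{s-1} + \alpha_{2,s}^2\overline{L}(D^*_{m,s-1}-D^*_{m,s}) + \sum_{k=1}^m\bar{r}_{k,s}.
\end{equation*}

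Step 2 (potentials and the bridge). Divide by $\alpha_{2,s}^2$ and introduce
\begin{equation*}
\Phi_s := \frac{(1-\alpha_{1,s})U_s + d_{m,s}}{\alpha_{2,s}^2}, \qquad \Psi_s := \frac{\alpha_{1,s}d_{m,s-1} + m\alpha_3\tilde{d}_{s-1}}{\alpha_{2,s}^2},
\end{equation*}
so that the inequality reads $\Phi_s \leq \Psi_s + \overline{L}(D^*_{m,s-1}-D^*_{m,s}) + \sum_k\bar{r}_{k,s}/\alpha_{2,s}^2$. The crux of the proof is the one-step bridge $\Psi_{s+1} \leq \Phi_s$. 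Using $m\tilde{d}_s \leq U_s + d_{m,s}$ from \eqref{eq:xsupdate} and the identity $\alpha_{1,s+1}+\alpha_3 = 1-\alpha_{2,s+1}$, I first bound $\Psi_{s+1} \leq \bigl((1-\alpha_{2,s+1})d_{m,s} + \alpha_3 U_s\bigr)/\alpha_{2,s+1}^2$; condition 2 of \eqref{eq:alphaupdate} then dominates the $d_{m,s}$ coefficient and condition 1 dominates the $U_s$ coefficient, delivering $\Psi_{s+1} \leq \Phi_s$.

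Step 3 (telescope and initialize). Chain $\Psi_{s+1} \leq \Phi_s \leq \Psi_s + \overline{L}(D^*_{m,s-1}-D^*_{m,s}) + \sum_k \bar{r}_{k,s}/\alpha_{2,s}^2$ and sum from $i=1$ to $s$; the Bregman increments collapse to $D^*_{m,0}-D^*_{m,s}$. For the base term $\Psi_1 = (\alpha_{1,1}d_{m,0}+m\alpha_3\tilde{d}_0)/\alpha_{2,1}^2$, invoke the natural initialization $m\tilde{d}_0 \leq \sum_{k=1}^m d_{k,0}$ (available since $\tilde{x}_0$ may be taken as the average or as $\argmin_k f^P(x_{k,0})$) together with $\alpha_{1,1}+\alpha_3=1-\alpha_{2,1}$ to obtain $\Psi_1 \leq \bigl((1-\alpha_{2,1})d_{m,0}+\alpha_3\sum_{k=1}^{m-1}d_{k,0}\bigr)/\alpha_{2,1}^2$. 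Finally, discarding $\alpha_{1,s+1}d_{m,s}\geq 0$ gives $m\alpha_3\tilde{d}_s/\alpha_{2,s+1}^2 \leq \Psi_{s+1}$, and multiplying through by $\alpha_{2,s+1}^2/(m\alpha_3)$ reproduces the claimed bound term-by-term.

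The main obstacle is locating the correct potentials. Telescoping the single quantity $m\alpha_3\tilde{d}_s/\alpha_{2,s+1}^2$ alone leaves a residual $\alpha_{1,s}d_{m,s-1}/\alpha_{2,s}^2$ that cannot be absorbed, because $\alpha_{1,s}/\alpha_{2,s}^2$ moves in the wrong direction with $s$. Packaging $\tilde{d}_s$ together with $d_{m,s}$ inside $\Psi_{s+1}$, and matching it against $d_{m,s}$ and $U_s$ inside $\Phi_s$, is exactly what the two inequalities of \eqref{eq:alphaupdate} are engineered to close simultaneously; the asymmetric weighting of $d_{m,0}$ in the final bound is the footprint of this choice, since $d_{m,0}$ enters as the boundary slot $d_{0,1}$ whereas the remaining $d_{k,0}$ enter only through the a priori bound on $\tilde{d}_0$.
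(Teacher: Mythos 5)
Your proof is correct and follows essentially the same route as the paper: specialize Proposition \ref{prop:root} to $x=x^*$, sum over the inner loop, use \eqref{eq:xsupdate} and $\alpha_{1,s}+\alpha_3=1-\alpha_{2,s}$ to fold $\tilde{d}_{s-1}$ into the $d_{k,s-1}$'s, and close the telescope with the two inequalities of \eqref{eq:alphaupdate} --- your bridge $\Psi_{s+1}\leq\Phi_s$ passes through exactly the quantity $\frac{1-\alpha_{2,s+1}}{\alpha_{2,s+1}^2}d_{m,s}+\frac{\alpha_3}{\alpha_{2,s+1}^2}\sum_{k=1}^{m-1}d_{k,s}$ that the paper telescopes directly. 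The only (welcome) difference is that you explicitly flag the initialization requirement $m\tilde{d}_0\leq\sum_{k=1}^m d_{k,0}$, which the paper uses implicitly at $s=1$.
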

\begin{theorem}[Convergence rate of exact ARMD] \label{thrm:main}
Suppose that ${\alpha_{2,s} \leq \frac{2}{s+2}}$, $x^* \in X_s$, and $\varepsilon_{k,s}=0$, then we have:
$$\expect (F^P(\tilde{x}_s) - F^P(x^*))=O\bracket{\frac{F^P(\tilde{x}_0)-F^P(x^*)+ \frac{4\overline{L}}{m} D(x^*,z_0)}{(s+3)^2}}.$$
\end{theorem}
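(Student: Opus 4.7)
The plan is to read off the rate by specializing Proposition~\ref{prop:result} to the exact setting; no fundamentally new analysis is needed beyond that proposition.

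First I would show that the error double-sum $\alpha_{2,s+1}^2 \sum_{i,k} \frac{\bar r_{k,i}}{m\alpha_3 \alpha_{2,i}^2}$ vanishes. When $\varepsilon_s = 0$, the ``$\approx_{\varepsilon_s}$'' update actually attains the minimum, so $z_{k,s} = \bar z_{k,s}$. Then $D(z_{k,s}, \bar z_{k,s}) = 0$ and $\grad h(\bar z_{k,s}) - \grad h(z_{k,s}) = 0$, and together with $\varepsilon_s = 0$ this forces every $\bar r_{k,i} = 0$.

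Next I would handle the initialization bracket. Using the natural choice $x_{1,0} = x_{2,0} = \cdots = x_{m,0} = \tilde x_0$, each $d_{k,0}$ reduces to $\Delta_0 := f^P(\tilde x_0) - f^P(x^*)$, so the first two terms inside the bracket collapse to a single coefficient $C_1 \Delta_0$ with
\[
C_1 \;=\; \frac{1-\alpha_{2,1}}{\alpha_{2,1}^2 \alpha_3 m} + \frac{m-1}{m\alpha_{2,1}^2},
\]
which depends only on the constants $\alpha_{2,1}$ and $\alpha_3$, not on $s$. For the Bregman term I use $z_{m,0} = z_0$ (set in the algorithm) and $D(x^*, z_{m,s}) \ge 0$ (Lemma~\ref{lem:DgeqE}) to bound $\mathbb E D(x^*, z_{m,0}) - \mathbb E D(x^*, z_{m,s}) \le D(x^*, z_0)$, so the third term is at most $\frac{\overline L}{m\alpha_3} D(x^*, z_0)$.

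Finally I plug in the stepsize assumption: $\alpha_{2,s+1} \le \frac{2}{s+3}$ gives $\alpha_{2,s+1}^2 \le \frac{4}{(s+3)^2}$, and combining the two pieces yields
\[
\tilde d_s \;\le\; \frac{4 C_1 \Delta_0}{(s+3)^2} + \frac{4 \overline L\, D(x^*, z_0)}{m \alpha_3 (s+3)^2},
\]
which is exactly $O\!\left( \big(\Delta_0 + \tfrac{\overline L}{m} D(x^*, z_0)\big) / (s+3)^2 \right)$, absorbing $C_1$ and $1/\alpha_3$ into the $O(\cdot)$ constant.

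The main ``obstacle'' is really only bookkeeping: one must check that $C_1$ and $1/\alpha_3$ are genuinely $s$-independent, so no hidden polylog or polynomial factor in $s$ sneaks in. For the canonical admissible schedule $\alpha_{2,s} = \frac{2}{s+\nu}$ with $\nu \ge 2$ and $\alpha_3 \le \frac{\nu-1}{\nu+1}$ (which is easily seen to satisfy \eqref{eq:alphaupdate}), both $\alpha_{2,1}$ and $\alpha_3$ are fixed constants, so this check is routine and the stated rate follows.
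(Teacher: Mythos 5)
Your proposal is correct and follows essentially the same route as the paper: specialize Proposition~\ref{prop:result} at $x=x^*$, observe that $\varepsilon_s=0$ forces $z_{k,s}=\bar z_{k,s}$ and hence $\bar r_{k,s}=0$, drop the nonpositive $-\expect D(x^*,z_{m,s})$ term, and bound $\alpha_{2,s+1}^2\le 4/(s+3)^2$. Your explicit initialization $x_{1,0}=\cdots=x_{m,0}=\tilde x_0$ is just a concrete instantiation of the paper's ``without loss of generality'' normalization of the bracket, so there is no substantive difference.
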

\begin{remark}
In the case of exact ARMD, i.e., $\varepsilon_{k,s}=0$,  each stage $s$ of Algorithm \ref{alg:ASM} computes $(n+2m)$ gradients (if we save all gradients when computing $\tilde{v}$, then the number of gradients to be calculated at stage $s$ is only $n+m$), hence Theorem \ref{thrm:main} yields the complexity, i.e., total number of gradients computed to obtain an $\varepsilon$-optimal solution of \eqref{eq:composite} is
$O\bracket{\bracket{\frac{\sqrt{F^P(\tilde{x}_0)-F^P(x^*)+ \frac{4\overline{L}}{m} D(x^*,z_0)}}{\sqrt{\varepsilon}}-3}(n+2m)}.$
The smallest possible value of $L_Q$ is $L_Q=L_A$ when we choose $q_i=\nicefrac{L_i}{\sum_j L_j}$, i.e, the sampling probabilities of $f_i$ are proportional to their Lipschitz constants. In this case, $\overline{L}=\bracket{1+\nicefrac{4}{\alpha_3}}L_A$. Now let us choose $m=O(n)$. Hiding $F^P$ and $D(x^*,z_0)$, we can rewrite the complexity as $O\bracket{\nicefrac{(n+\sqrt{n L_A})}{\sqrt{\varepsilon}} }$. This improves upon the complexity of APG, which is $O\bracket{\nicefrac{n\sqrt{L_A}}{\sqrt{\varepsilon}}}$ (see \cite[Corrolary 1]{Tseng}). This improvement results from the fact that each step of APG must compute the  full gradient of $F(x)$ while our method only computes it periodically. In Sect.~\ref{sec:experiment}, we show experiments comparing exact ARMD with APG that verify this property.
\end{remark}
%
%To establish convergence rate of inexact ARMD, we consider 2 cases.
%
%\begin{assumption}
%\label{asstn:bounded}
%The sequences $\{z_{k,s}\}$ generated by Algorithm \ref{alg:ASM} are bounded, i.e, there exists a constant $C$ such that  $\norm{z_{k,s}}\leq C.$
%\end{assumption}
%
\begin{theorem}[Convergence rate of inexact ARMD] \label{thrm:main_inexact}
Suppose that $\alpha_{2,s} \leq \frac{2}{s+2}$, $x^* \in X_s$, and $h(\cdot)$ is $L_h$-Lipschitz smooth. Let $\left\{\epsilon_s\right\}$ be a sequence of nonnegative numbers that satisfies $\sum\limits_{i=1}^\infty \sqrt{\frac{\epsilon_i}{\alpha_{2,i}}}$ is convergent. 
\begin{itemize}
\item[(i)] Assume that there exists a constant $C$ such that $\norm{z_{k,s}}\leq C$ (for example, if $P$ is the indicator function  of a bounded closed convex set, then its domain is naturally bounded; thus, as $z_{k,s}\in \domain(P)$  we have $\left\{z_{k,s}\right\}$ is bounded) and we let the error $\varepsilon_{k,s}$ in \eqref{eq:proxpoint} be $\varepsilon_{k,s}=\epsilon_s$ for $k=1,\ldots,m$, then there exists a constant $\bar{C}=O(C L_h \sqrt{\bar{L}})$ such that
\begin{equation}
\label{eq:rateinexact}\expect (F^P(\tilde{x}_s) - F^P(x^*))=O\bracket{\frac{F^P(\tilde{x}_0)-F^P(x^*)+ \frac{4\overline{L}}{m} D(x^*,z_0) + \bar{C}}{(s+3)^2}}
\end{equation}
\item[(ii)] Assume that $P$ is bounded below (this assumption is satisfied in many practical problems, for instances, the examples in the introduction section satisfying $P(x)\geq 0, \forall x\in \mathcal{E}$), and we use the following adaptive update rule 
\begin{equation}
\label{eq:adaptive}
\max\left\{\norm{\bar{z}_{k,s}} ^2\varepsilon_{k,s},C\varepsilon_{k,s}\right\}\leq C \epsilon_s,
\end{equation}
where $C$ is some constant, then there also exists a constant $\bar{C}=O(CL_h \sqrt{\bar{L}})$ such that \eqref{eq:rateinexact} is satisfied. 
\end{itemize}
\end{theorem}
\begin{remark}
The type of the inexact proximal point in \eqref{eq:proxpoint} has been first considered in \cite{Auslender1987}. It can be checked by verifying the duality gap while solving the minimization problem in  \eqref{eq:proxpoint}, see e.g., \cite{Schmidt2011,Villa}. To use the adaptive update rule \eqref{eq:adaptive}, we  give an upper bound for  $\norm{\bar{z}_{k,s}}$ before computing the inexact proximal point in \eqref{eq:proxpoint} as follows. Without lost of generality, we assume that  $P(x)\geq  0, \forall x \in  \mathcal{E}$ . Since  $\bar{z}_{k,s}$ is the exact proximal point of \eqref{eq:proxpoint},
\[\iprod{v_k}{\bar{z}_{k,s}}+P(\bar{z}_{k,s})+\theta_s D(\bar{z}_{k,s},z_{k-1,s})\leq \iprod{v_k}{z_{k-1,s}} + P(z_{k-1,s}).
\] 
This implies $\frac{\theta_s}{2}\norm{\bar{z}_{k,s}-z_{k-1,s}}^2 \leq \norm{v_k}_* \norm{\bar{z}_{k,s}-z_{k-1,s}} + P(z_{k-1,s}).$
Hence, we have 
\[\norm{\bar{z}_{k,s}}\leq \norm{\bar{z}_{k,s}-z_{k-1,s}} + \norm{z_{k-1,s}}\leq \frac{\norm{v_k}_*}{\theta_s}+\sqrt{\frac{\norm{v_k}^2_*}{\theta_s^2}+\frac{2P(z_{k-1})}{\theta_s}}+ \norm{z_{k-1,s}}. 
\]
\end{remark}
\begin{remark}
 In the case of inexact ARMD, we require the series $\sum\limits_{i=1}^\infty \sqrt{\frac{\epsilon_i}{\alpha_{2,i}}}$ to be convergent to obtain the convergence rate $O(1/s^2)$. A sufficient condition is that  $\epsilon_s$ decreases as  $O(1/s^{3+\delta})$ for $\delta>0$. It is worth mentioning that for the deterministic inexact proximal-gradient (PG) method considered in \cite{Schmidt2011}, the sequence of errors $\left\{\epsilon_s\right\}$ is required to decrease as  $O(1/s^{2+\delta})$ for the basic PG method (see \cite[Proposition 1]{Schmidt2011}), and as $O(1/s^{4+\delta})$ for the accelerated PG method (see \cite[Proposition 2]{Schmidt2011}). Comparing with our inexactness condition in Theorem \ref{thrm:main_inexact}(i), the inexactness of $\left\{\epsilon_s\right\}$ is one order more than in the basic PG method, but it is one order weaker  than the accelerated PG method. This happens unsurprisingly since although our scheme applies acceleration technique for the inner loop, it still computes full gradients for the outer loop. 
\end{remark}

\subsection{Extension to Nonsmooth Problems}
\label{subsec:nonsmooth}
Problem \eqref{eq:composite} with non-smooth $f_i$ often arises in statistics and machine learning. One well-known example is the $l_1$-SVM:
\begin{equation}\label{eq:SVM}
\min\limits_x\left\{\frac{1}{n}\sum\limits_{i=1}^n \max\{1-b_i\iprod{a_i}{x},0\} + \lambda \norm{x}_1\right\}.
\end{equation}
If $f_i(x)$ in \eqref{eq:composite} is non-smooth, we smooth it by $f_{\mu,i}$ and then solve the problem:
\begin{equation} \label{eq:smoothprob}
\min\limits_{x\in X}F^P_\mu(x):=\frac{1}{n}\sum\limits_{i=1}^n f_{\mu,i}(x)+P(x),
\end{equation}
where we assume the smooth functions $f_{\mu,i}$ satisfy the following assumptions for $\mu>0$.
\begin{assumption}
\label{asstn:smooth}
\begin{itemize}
\item[(1)] Functions $f_{\mu,i}$ are convex,  $L_{\mu,i}$-Lipschitz smooth, and 
\item[(2)] There exist constants $\overline{K}_i,\underline{K}_i\geq 0$ such that $f_{\mu,i}(x) -\underline{K}_i\mu \leq  f_i(x) \leq f_{\mu,i}(x) +  \overline{K}_i \mu$, $\forall x$.
\end{itemize}
\end{assumption}
Note that it is not necessary to smooth $P(x)$, and this can be useful in practice. For example, if $P(x)$ is the $\ell_1$ norm, then smoothing it undermines its ability to obtain sparse solutions.
We now give some examples of popular smoothing functions that satisfy Assumption \ref{asstn:smooth}.
\begin{example}
\label{ex:smooth}
\begin{itemize}
\item Let $A$ be a matrix in $\mathbb{R}^{q\times p}$, $u\in \mathbb{R}^q$, $U$ be a closed convex bounded set, and $f(x)=\max\limits_{u\in U}\left\{ \iprod{Ax}{u}-\hat{\phi}(u)\right\}$, where $\hat{\phi}(u)$ is a continuous convex function. Let $R(u)$ be a continuous strongly convex function with constant $\sigma>0$ and $\norm{A}$ be the spectral norm of $A$. Nesterov in \cite{Nesterov} proved that the convex function $f_\mu(x)=\max\limits_{u\in U} \left\{\iprod{Ax}{u}-\hat{\phi}(u) -\mu R(u)\right\}$ is a $L_\mu$-Lipschitz smooth function of $f(x)$ with $L_\mu=\frac{\norm{A}^2}{\mu \sigma}$. We easily see that $f(x)-K \mu \leq f_\mu(x) \leq f(x)$ for $\mu>0$, where $K=\max\limits_{u\in U} R(u)$. Hence $f_\mu(x)$ satisfies Assumption \ref{asstn:smooth}.
\item One smoothing function of $[x]_+=\max\{x,0\}$ is $f^{(a)}_\mu(x)=\frac12\bracket{x+\sqrt{x^2+4\mu^2}}$, which is convex and $L_\mu$-Lipschitz smooth with $L_\mu=\frac{1}{4\mu}$. Since $[x]_+\leq f^{(a)}_\mu(x)\leq [x]_+ + \mu$, Assumption \ref{asstn:smooth} is satisfied. Another smoothing function of $[x]_+$ is the neural network smoothing function $f^{(b)}_\mu(x)=\mu\log\bracket{1+e^{\nicefrac{x}{\mu}}}$, which is widely used in SVM problems \cite{Lee2001}. It is not difficult to prove that $[x]_+ \leq f^{(b)}_\mu(x)\leq [x]_+ + (\log2)\mu$ and $f^{(b)}_\mu(x)$ is $L_\mu$-Lipschitz smooth with $L_\mu=\frac{1}{4\mu}$. Hence, $f^{(b)}_\mu(x)$ satisfies Assumption \ref{asstn:smooth}. Using these smoothing functions of $[x]_+$, we can smooth the $l_1$-SVM \eqref{eq:SVM} to get the form \eqref{eq:smoothprob}.
\end{itemize}
\end{example}
%
%The following Theorem proves that if we choose appropriate smoothing parameter $\mu$, we can achieve an $\varepsilon$-optimal solution of \eqref{eq:composite} after running $O\bracket{\frac{1}{\sqrt{\varepsilon}}}$ stages of Algorithm \ref{alg:ASM}. 
We now state the main theorem for this non-smooth case.
\begin{theorem} \label{thrm:smoothcase}
Denote $\overline{K}=\frac{1}{n}\sum\limits_{i=1}^n \overline{K}_i$ and $\underline{K}=\frac{1}{n}\sum\limits_{i=1}^n \underline{K}_i$. Let $\left\{\tilde{x}_{\mu,s}\right\}$ be the sequence generated by Algorithm \ref{alg:ASM} for solving Problem \eqref{eq:smoothprob}, $\bar{L}_\mu$ be the corresponding value of $\bar{L}$ in Algorithm \ref{alg:ASM} for Problem \eqref{eq:smoothprob}, and $x^*$ be the solution of \eqref{eq:composite}. We have:
$
\expect F^P(\tilde{x}_{\mu,s})-F^P(x^*)=O\bracket{\frac{1+\frac{4\overline{L}_\mu}{m}+\bar{C}}{(s+3)^2}} + \bracket{\overline{K}+\underline{K}} \mu,
$
where $\bar{C}=0$ if the involving proximal points are calculated exactly, and $\bar{C}=O\bracket{\sqrt{\bar{L}_\mu}}$ if the involving proximal points are calculated inexactly under the assumptions of Theorem \ref{thrm:main_inexact}.
\end{theorem}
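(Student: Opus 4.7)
The plan is to apply the convergence result we already have (Theorem \ref{thrm:main} in the exact case, or Theorem \ref{thrm:main_inexact} in the inexact case) to the smoothed problem \eqref{eq:smoothprob}, and then translate the bound back to the original non-smooth objective $f^P$ using the two-sided sandwich in Assumption \ref{asstn:smooth}(2). Since each $f_{\mu,i}$ is convex and $L_{\mu,i}$-smooth, problem \eqref{eq:smoothprob} fits the framework of Section \ref{sec:iASMD} with the Lipschitz constant $\bar L_\mu$, so Algorithm \ref{alg:ASM} produces an iterate $\tilde{x}_{\mu,s}$ satisfying
\[
\expect f^P_\mu(\tilde{x}_{\mu,s}) - f^P_\mu(x^*_\mu) \;=\; O\!\left(\frac{f^P_\mu(\tilde{x}_0)-f^P_\mu(x^*_\mu)+\tfrac{4\bar{L}_\mu}{m}D(x^*_\mu,z_0)+\bar{C}}{(s+3)^2}\right),
\]
where $x^*_\mu$ denotes a minimizer of $f^P_\mu$ and $\bar C = 0$ in the exact case, $\bar C = O(\sqrt{\bar L_\mu})$ in the inexact case under the hypotheses of Theorem \ref{thrm:main_inexact}.

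Next I would exploit Assumption \ref{asstn:smooth}(2). Averaging the two inequalities over $i$ gives, for every $x$,
\[
f^P_\mu(x) - \underline{K}\mu \;\leq\; f^P(x) \;\leq\; f^P_\mu(x) + \overline{K}\mu.
\]
The upper bound applied at $x=\tilde{x}_{\mu,s}$ yields $f^P(\tilde{x}_{\mu,s}) \leq f^P_\mu(\tilde{x}_{\mu,s}) + \overline{K}\mu$, while the lower bound applied at $x=x^*$ yields $f^P_\mu(x^*)\leq f^P(x^*)+\underline{K}\mu$. Combining with the fact that $f^P_\mu(x^*_\mu)\leq f^P_\mu(x^*)$ (since $x^*_\mu$ is the minimizer of $f^P_\mu$), we get the chain
\[
\expect f^P(\tilde{x}_{\mu,s}) - f^P(x^*) \;\leq\; \bigl(\expect f^P_\mu(\tilde{x}_{\mu,s})-f^P_\mu(x^*_\mu)\bigr) + \bigl(f^P_\mu(x^*) - f^P(x^*)\bigr) + \overline{K}\mu,
\]
and the middle term is at most $\underline{K}\mu$. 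Plugging in the convergence rate from the first step produces the stated bound $O\bigl(\tfrac{1+4\bar L_\mu/m+\bar C}{(s+3)^2}\bigr)+(\overline{K}+\underline{K})\mu$, where the constants $f^P_\mu(\tilde{x}_0)-f^P_\mu(x^*_\mu)$ and $D(x^*_\mu,z_0)$ get absorbed into the $O(1)$ term as indicated in the statement.

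There is no real obstacle here beyond keeping careful track of which minimizer ($x^*$ or $x^*_\mu$) one is comparing to at each step; the key algebraic insight is that the two smoothing errors $\overline{K}\mu$ (from passing $f^P \leq f^P_\mu+\overline{K}\mu$) and $\underline{K}\mu$ (from passing $f^P_\mu(x^*)\leq f^P(x^*)+\underline{K}\mu$) accumulate additively, which is why both constants appear in the final $(\overline{K}+\underline{K})\mu$ term. In particular the proof is genuinely black-box with respect to the inner algorithm: one simply substitutes the exact or inexact rate depending on whether $\varepsilon_s = 0$, which accounts for the two choices of $\bar C$ in the conclusion.
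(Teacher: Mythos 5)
Your proposal is correct and follows essentially the same route as the paper: apply the convergence theorem (Theorem \ref{thrm:main} or \ref{thrm:main_inexact}) to the smoothed problem \eqref{eq:smoothprob}, then use the sandwich $f^P_\mu - \underline{K}\mu \leq f^P \leq f^P_\mu + \overline{K}\mu$ at $\tilde{x}_{\mu,s}$ and at $x^*$ respectively, together with $f^P_\mu(x^*_\mu) \leq f^P_\mu(x^*)$, to transfer the bound back to $f^P$. The bookkeeping of where each of $\overline{K}\mu$ and $\underline{K}\mu$ enters matches the paper's argument exactly.
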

This theorem shows that if we choose $\mu< \frac{\epsilon}{\overline{K}+\underline{K}}$  and suppose that $\bar{L}_\mu=O(1/\mu)$, then after running $s$ stages of Algorithm \ref{alg:ASM} with 
\[ 
s > \frac{\sqrt{1+\frac{4\overline{L}_\mu}{m}}}{\sqrt{\epsilon -(\overline{K}+\underline{K})\mu}}=O\bracket{\frac{1}{\epsilon^{1/2}}+\frac{1}{\sqrt{m}\epsilon}}
\]
for the case of exact proximal points, or
\[
s > \frac{\sqrt{1+\frac{4\overline{L}_\mu}{m}+\sqrt{\bar{L}_\mu}}}{\sqrt{\epsilon -(\overline{K}+\underline{K})\mu}}=O\bracket{\frac{1}{\epsilon^{3/4}}+\frac{1}{\epsilon^{1/2}}+\frac{1}{\sqrt{m}\epsilon}}
\]
for the case of inexact proximal points,  we can obtain an $\epsilon$-optimal solution of \eqref{eq:composite}. We remark that the condition  $\bar{L}_\mu=O(1/\mu)$ is satisfied by examples in Example \ref{ex:smooth}.
Problem \eqref{eq:composite} with non-smooth $f_i$ can also be solved by subgradient methods or stochastic subgradient methods with the convergence rate $ O(\frac{1}{\epsilon^2})$ \cite{Nesterov2004}, which is significantly slower than the rate obtained using our smoothing scheme. In terms of complexity (the total number of gradients computed to achieve an $\epsilon$-optimal solution), if $m=O(n)$, our smoothing scheme computes $O\bracket{n/\epsilon^{1/2}+\sqrt{n}/\epsilon}$ gradients for the exact proximal points case, and $O\bracket{n/\epsilon^{3/4}+n/\epsilon^{1/2}+\sqrt{n}/\epsilon}$ for the inexact case. These complexities definitely outperform the complexity $O\bracket{n/\epsilon^2}$ of subgradient methods.

\section{Experiments}
\label{sec:experiment}

\subsection{Experiment with Synthetic Datasets}
\label{sec:synthetic_exp}

We consider the Lasso problem with $f_i(x) = \frac12 (\iprod{a_i}{x} - b_i)^2 $ and $ P(x) = \lambda \norm{x}_1$. 
The problem is non-strongly convex and can also be solved by FISTA \cite{Beck2009}, SAGA \cite{Defazio_Bach} and APG \cite{Tseng}.
Proximal SVRG \cite{Xiao_Zhang} can also be applied to this problem, although we need to add a regularizer $\frac{c}{2} \norm{x}^2_2$ to $P(x)$ to maintain the strong convexity and hence the convergence property of this algorithm. 
For simplicity, we set $c = 0.001$ in this experiment.
% Since SAGA is much slower than FISTA and APG due to its $O(1/k)$ convergence rate, we only compare ARMD with FISTA and APG.

We generate 9 synthetic datasets with sizes $n = 1000, 10000, 50000$ and with $p = 10, 100, 500$ dimensions as follows.
First, we generate $n$ vectors $a_1, \ldots, a_N$ uniformly on $[0,10]^p$ and a true sparse target vector $x^* \in \{ 0, 1\}^p$ with a random half of its elements being 0 and the rest having value 1.
For each $a_i$, we generate $b_i = \iprod{a_i}{x^*} + \epsilon$ where $\epsilon \sim \mathcal{N}(0, 0.01^2)$ is a small Gaussian noise. We set $\lambda=0.1$  and use the same settings of ARMD on all the datasets.
We test two versions of exact ARMD based on different updating rules of $x_{k,s}$ in Algorithm \ref{alg:ASM}. 
Particularly, ARMD I and ARMD II respectively denote ARMD when using $x_{k,s} = \hat{x}_{k,s}$ and 
$x_{k,s} =\argmin\limits_x\left\{\iprod{v_k }{x} + \frac{\overline{L}}{2}\norm{x-y_{k,s}}^2 + P(x)\right\}$
in \eqref{eq:choosexk} (see Example \ref{ex:xksupdate}).
For both versions, we further test on two cases:
\[
(1) \quad \alpha_{2,s}=\frac{2}{s+2}, \alpha_{3}= \frac13; \qquad \text{ and } \qquad (2) \quad \alpha_{2,s}=\frac{2}{s+5},  \alpha_{3}= \frac23.
\]
For Lasso, the Lipschitz constant of $f_i$ is $L_i=a_i^2$. 
We use $D(x,y)=\frac12\norm{x-y}^2$ in ARMD and thus $z_{k,s}$ can be computed by soft thresholding.
We also set $q_i=\frac{1}{n}$ and $m=n$.
Note that ARMD reduces the complexity compared to the other algorithms.
So we evaluate the performance based on the objective function value and the optimality gap v.s. $\frac{\#  \text{ of computed gradients}}{n}$.
From the plots in Fig.~\ref{fig_synthetic}, the ARMD methods are significantly better than FISTA, SAGA, and APG in all datasets.
Proximal SVRG performs comparable to ARMD, although in all cases it converges to a worse optimal value than ARMD II with $\alpha_3 = 1/3$ (see Fig.~\ref{fig_synthetic}(b)).
This justifies that ARMD requires much less gradient computations to converge than the other algorithms.
From Fig.~\ref{fig_synthetic}(b), ARMD II with $\alpha_3 = 1/3$ consistently performs the best among different ARMD versions.

\begin{figure*}[!t]
\centering
\begin{subfigure}[b]{\textwidth}
    \centering
    \includegraphics[width=0.3\textwidth]{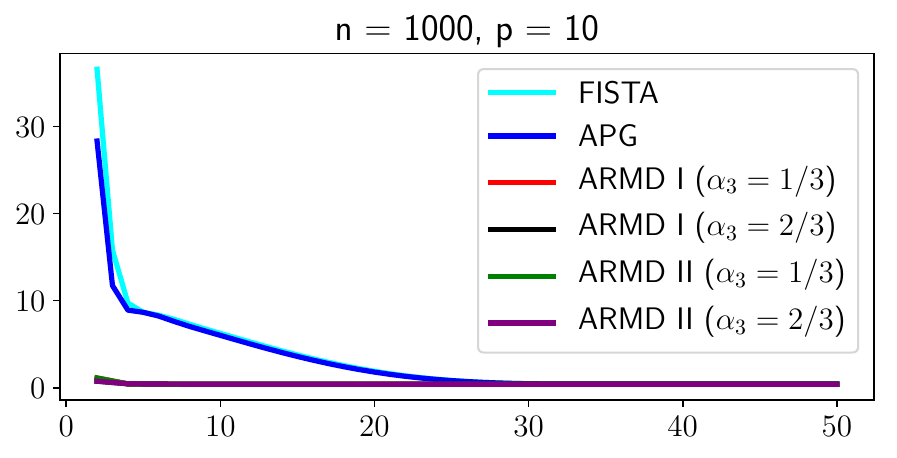}
    \includegraphics[width=0.3\textwidth]{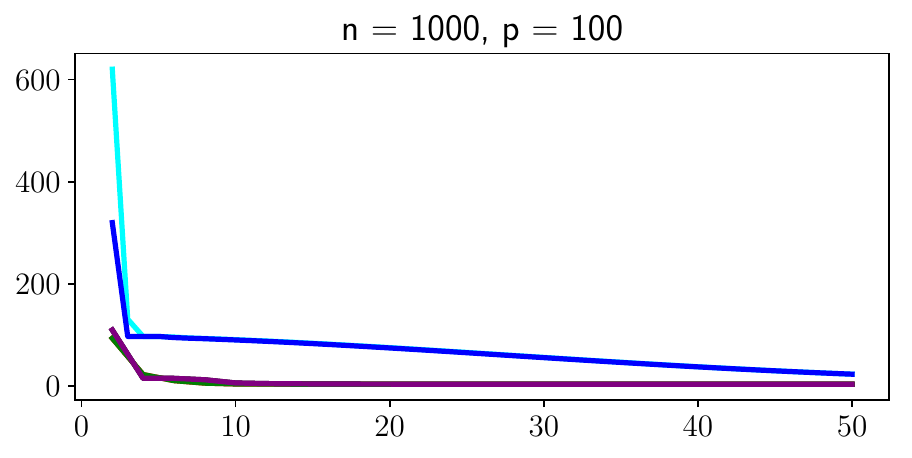}
    \includegraphics[width=0.3\textwidth]{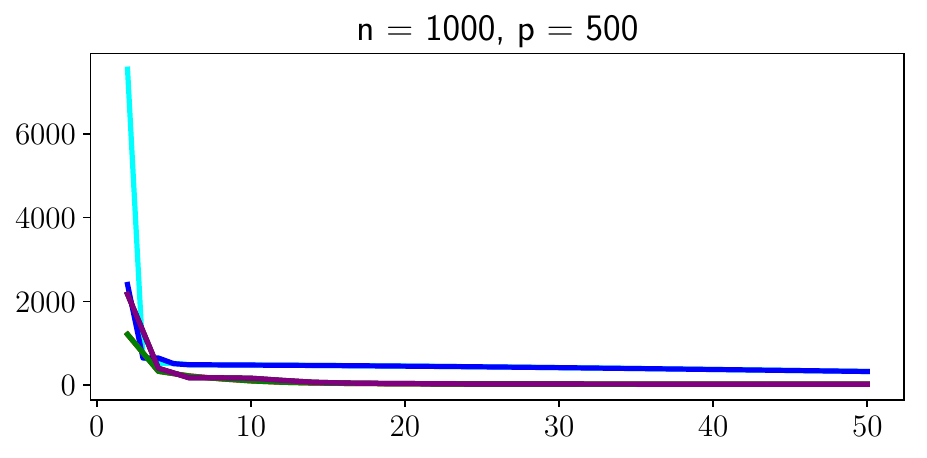}
    \\
    \includegraphics[width=0.3\textwidth]{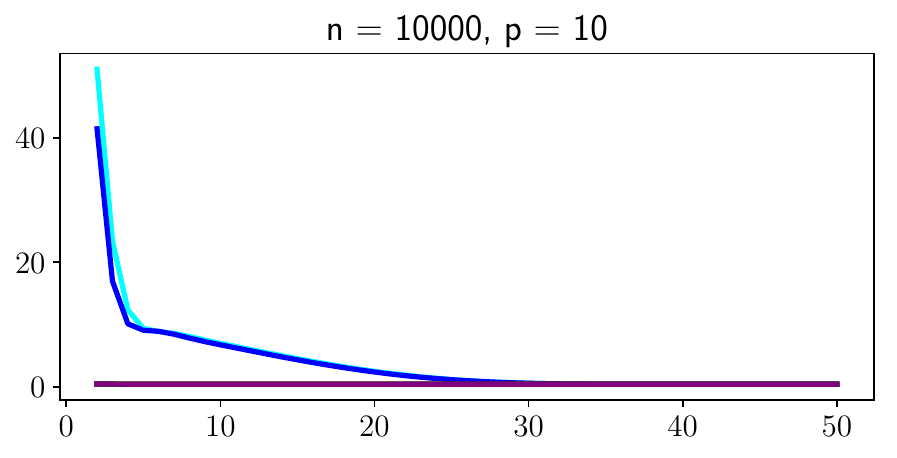}
    \includegraphics[width=0.3\textwidth]{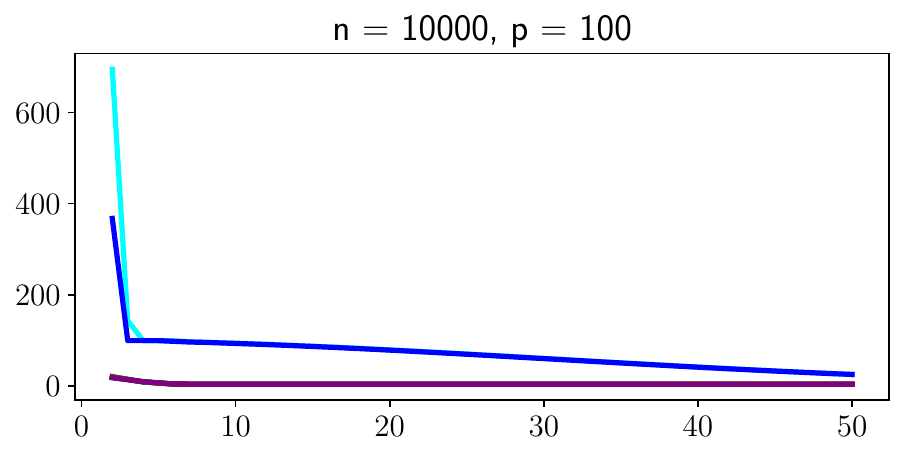}
    \includegraphics[width=0.3\textwidth]{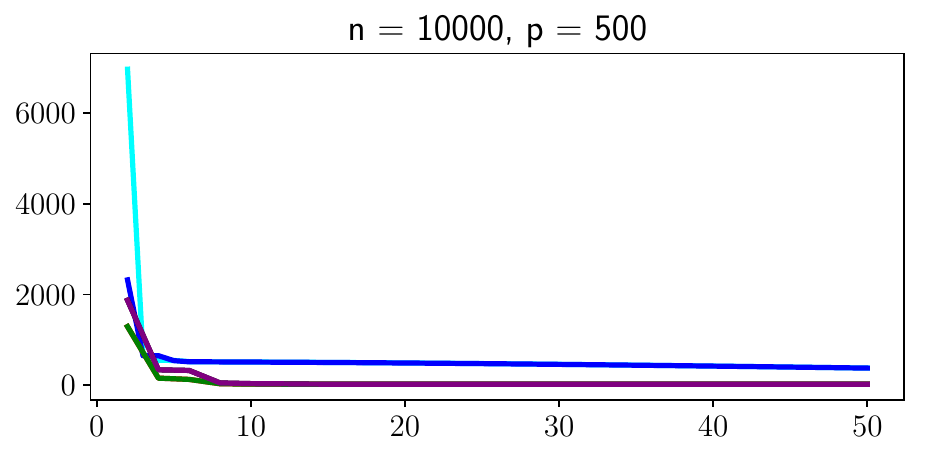}
    \\
    \includegraphics[width=0.3\textwidth]{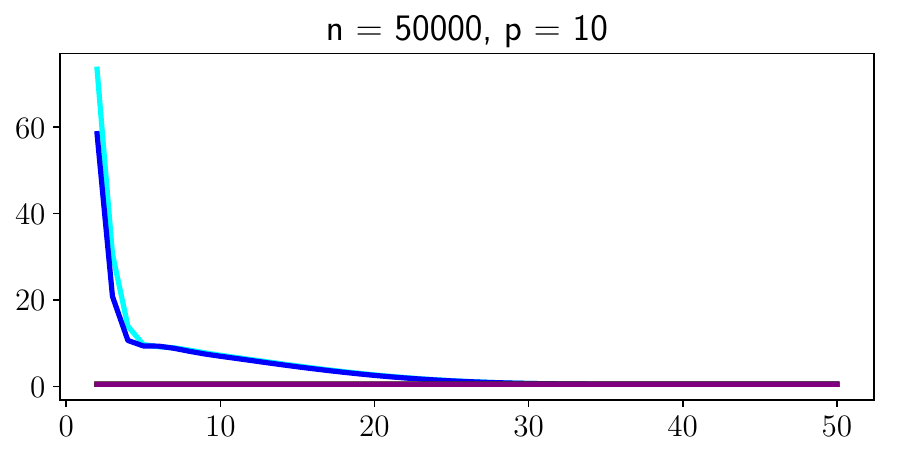}
    \includegraphics[width=0.3\textwidth]{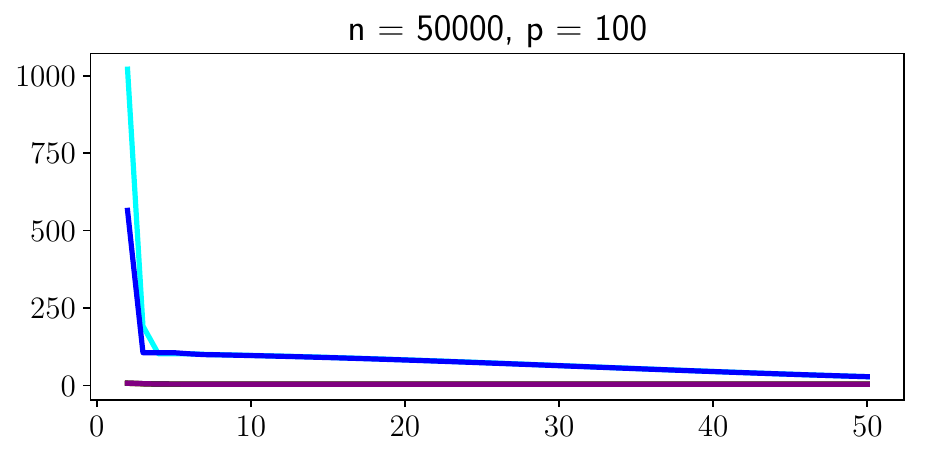}
    \includegraphics[width=0.3\textwidth]{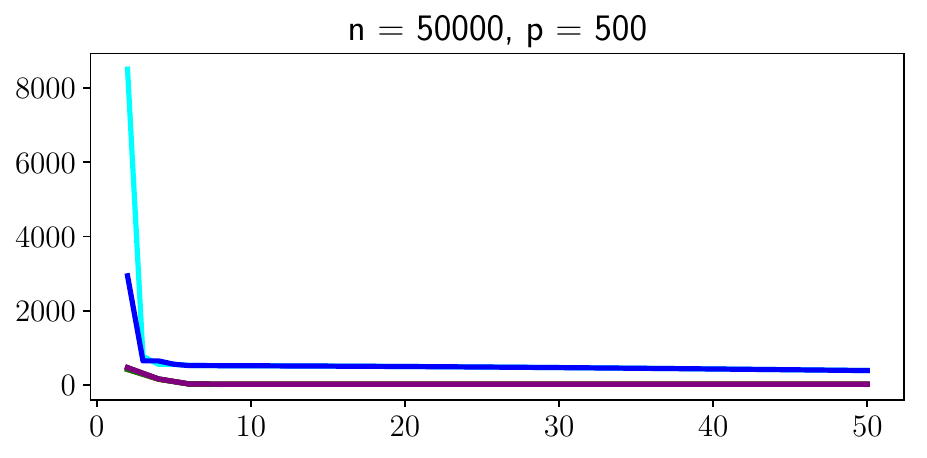}
\vspace{-3mm}
\caption{objective value}
\end{subfigure}

\begin{subfigure}[b]{\textwidth}
    \centering
    \includegraphics[width=0.3\textwidth]{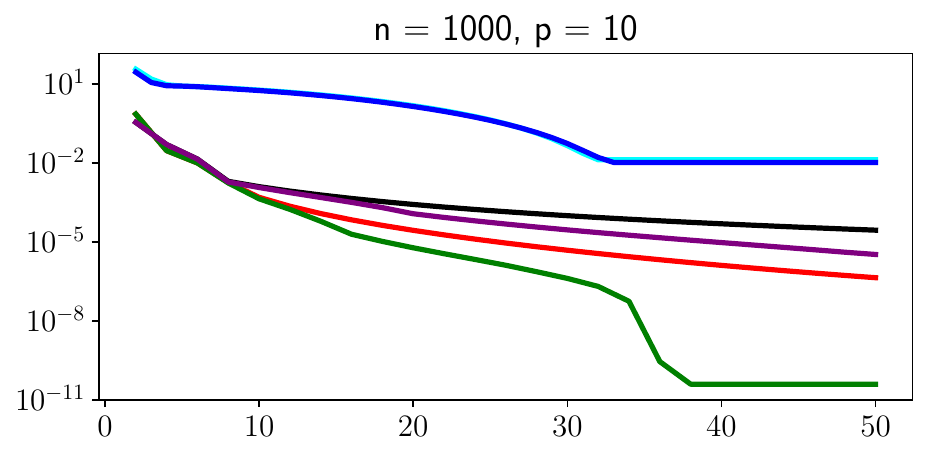}
    \includegraphics[width=0.3\textwidth]{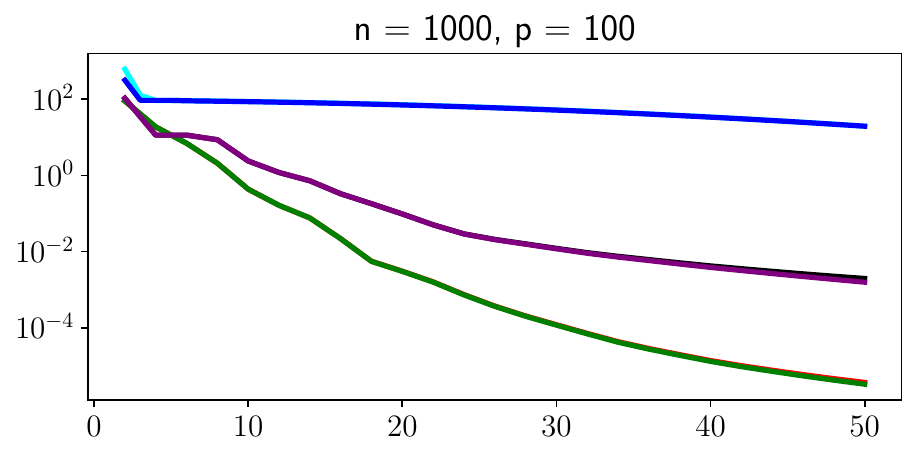}
    \includegraphics[width=0.3\textwidth]{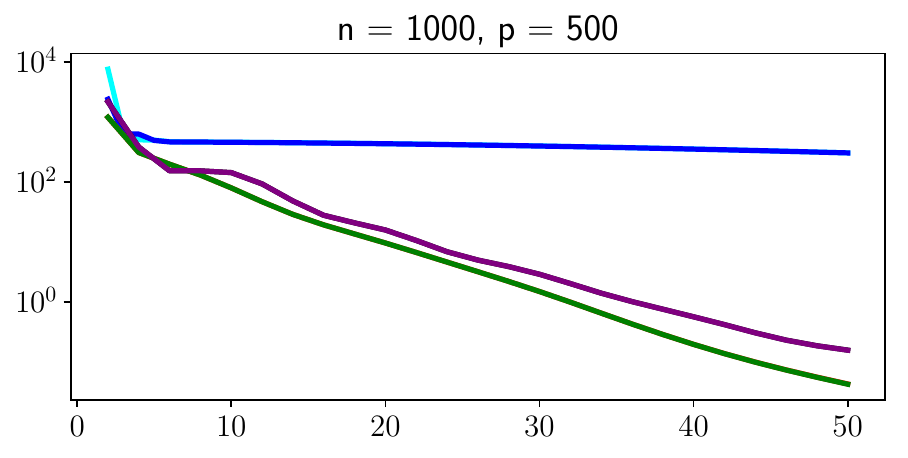}
    \\
    \includegraphics[width=0.3\textwidth]{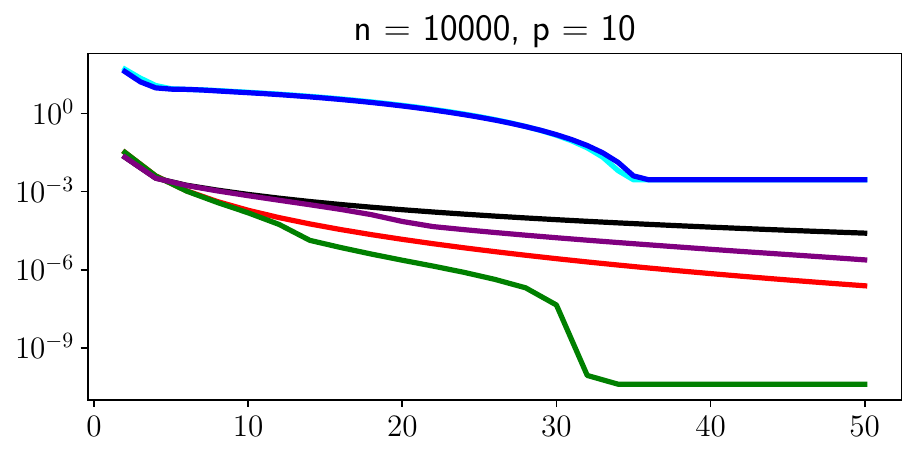}
    \includegraphics[width=0.3\textwidth]{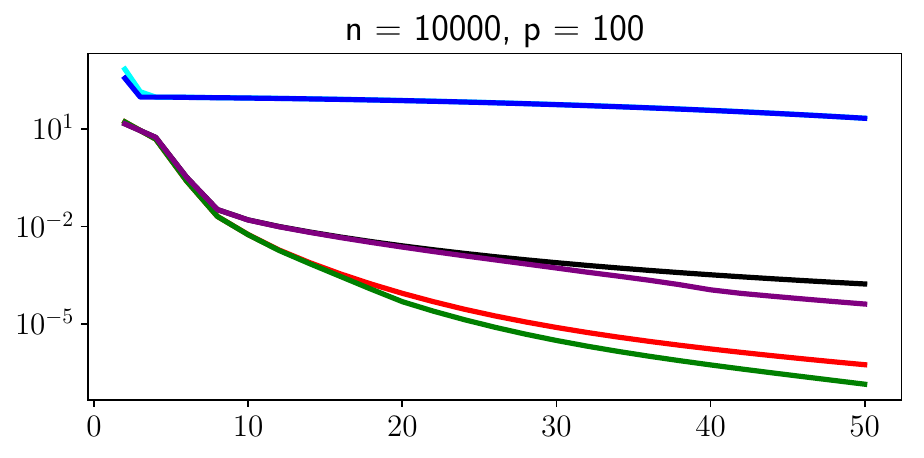}
    \includegraphics[width=0.3\textwidth]{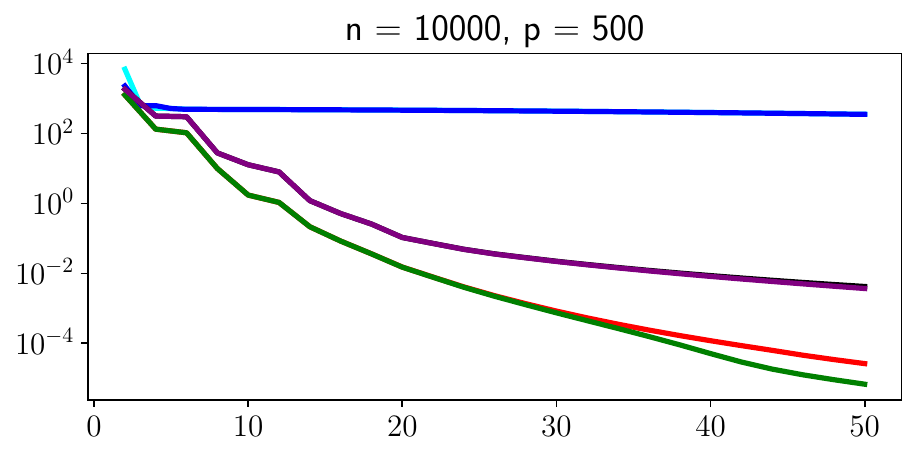}
    \\
    \includegraphics[width=0.3\textwidth]{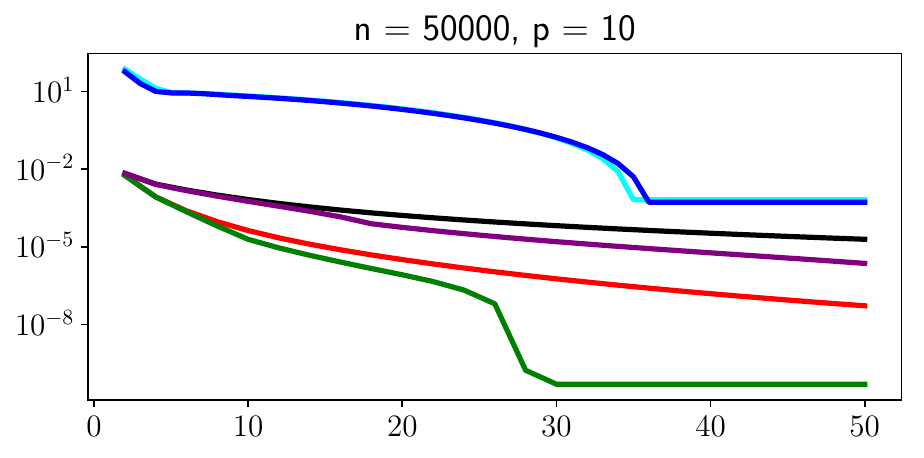}
    \includegraphics[width=0.3\textwidth]{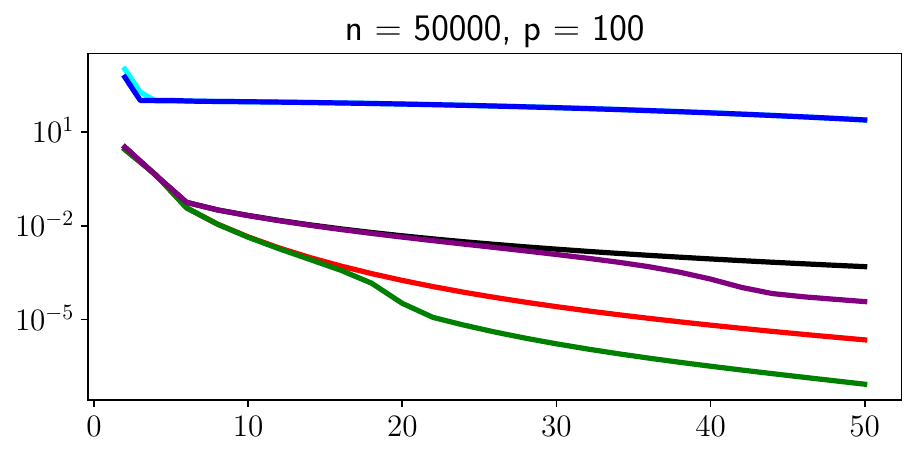}
    \includegraphics[width=0.3\textwidth]{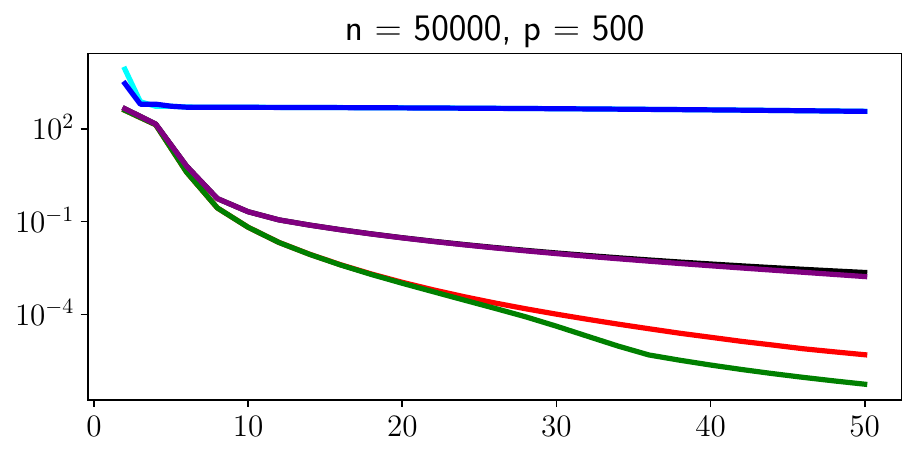}
\vspace{-3mm}
\caption{optimality gaps}
\end{subfigure}
\vspace{-0.6cm}
\caption{Comparison of FISTA, AGP, and exact ARMD for Lasso problem on synthetic datasets. The vertical axis is the objective value (or optimality gap) and the horizontal axis is the number of computed gradients$/n$.}
\label{fig_synthetic}
\end{figure*}

\subsection{Experiments with Real Datasets}

\textbf{Exact ARMD.}
%\label{sec:asmd_exp}
We consider the same Lasso problem and settings as in Sect.~\ref{sec:synthetic_exp} on 6 real datasets from \cite{libsvmdata}: breast-cancer ($n=683, p=10$), abalone ($n=4177, p=8$), mushrooms ($n=8124, p=112$), cpusmall ($n=8192, p=12$), letter ($n=15000,p=16$), and cadata ($n=20640,p=8$).
From Fig.~\ref{fig_resall}, the ARMD methods are also significantly better than FISTA, SAGA, and APG; and  ARMD II with $\alpha_3 = 1/3$ also performs the best among all algorithms on the datasets except for the cadata dataset, where ARMD II with $\alpha_3 = 2/3$ is the best method.
 
\begin{figure*}[!t]
\centering
\begin{subfigure}[b]{\textwidth}
    \centering
    \includegraphics[width=0.3\textwidth]{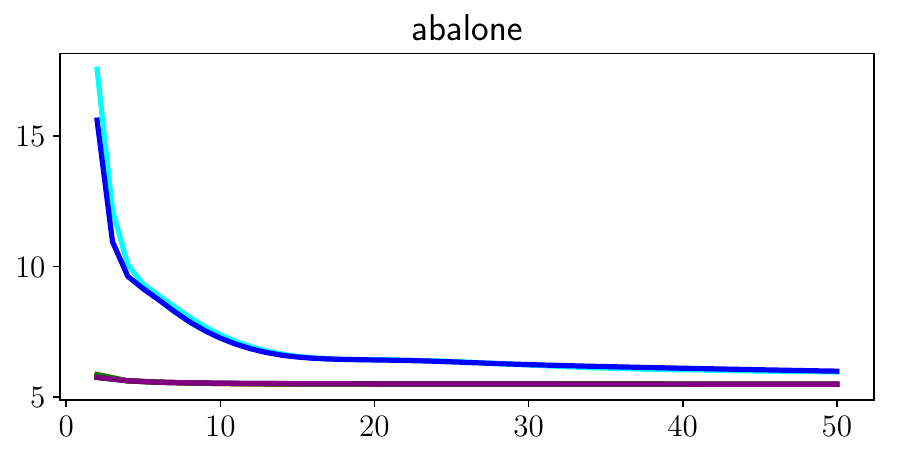}    
    \includegraphics[width=0.3\textwidth]{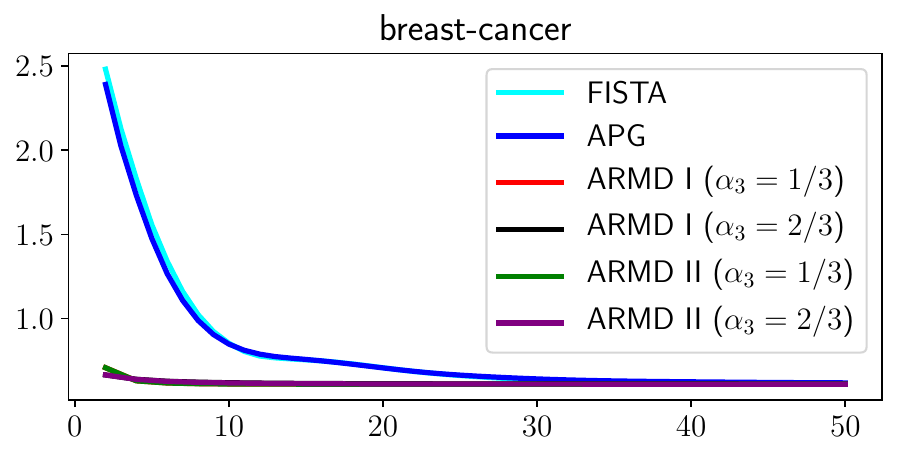}
    \includegraphics[width=0.3\textwidth]{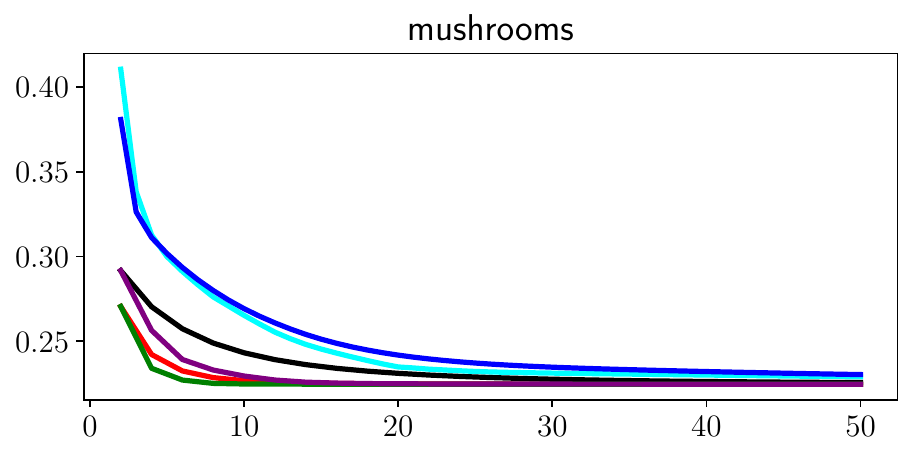}
    \\
    \includegraphics[width=0.3\textwidth]{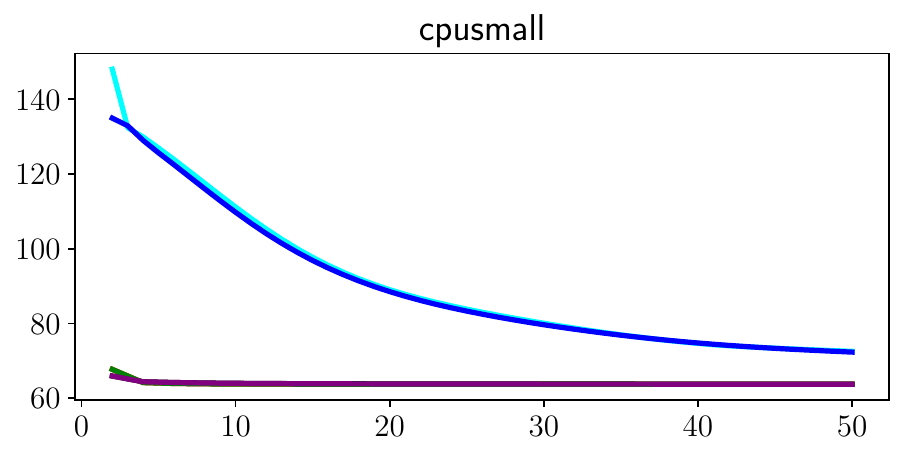}
    \includegraphics[width=0.3\textwidth]{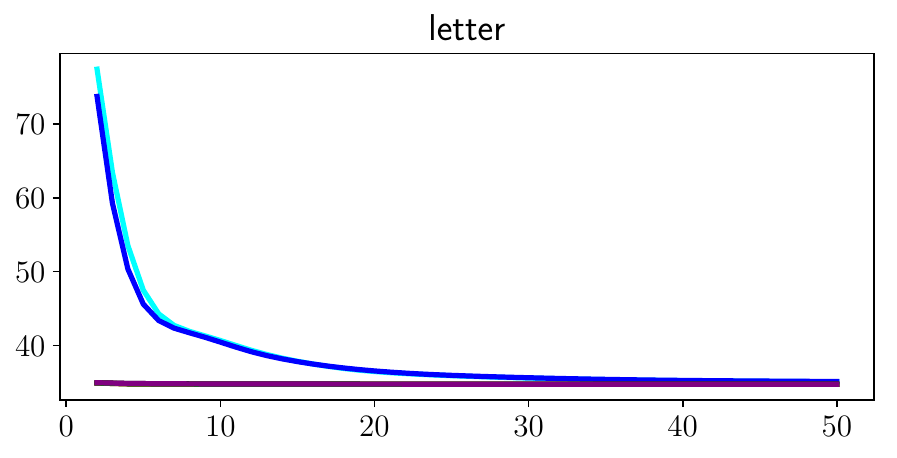}
    \includegraphics[width=0.3\textwidth]{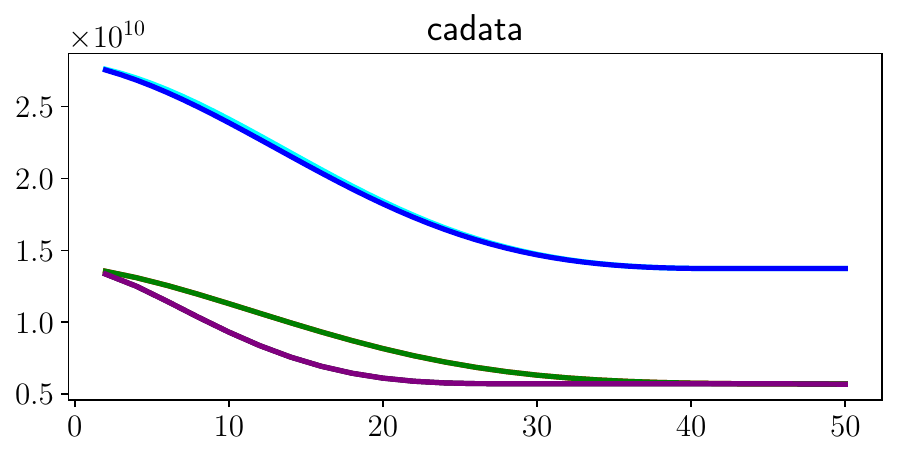}
\vspace{-3mm}
\caption{objective value}
\end{subfigure}

\begin{subfigure}[b]{\textwidth}
    \centering
    \includegraphics[width=0.3\textwidth]{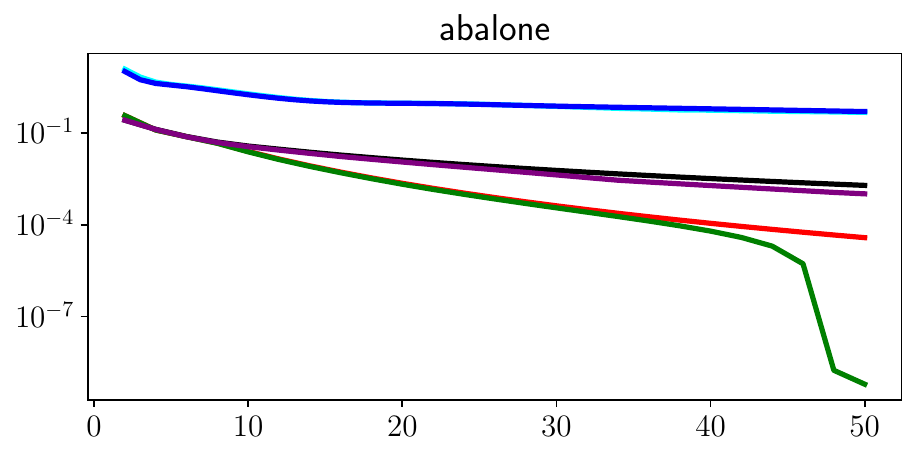}    
    \includegraphics[width=0.3\textwidth]{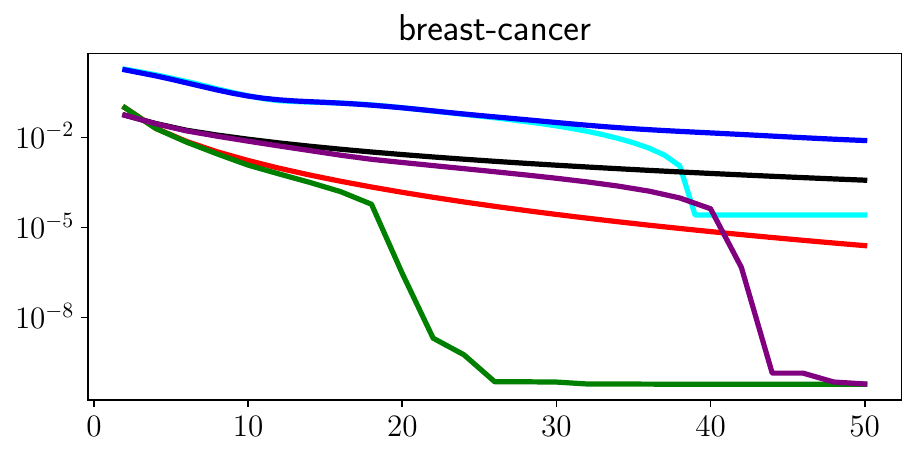}
    \includegraphics[width=0.3\textwidth]{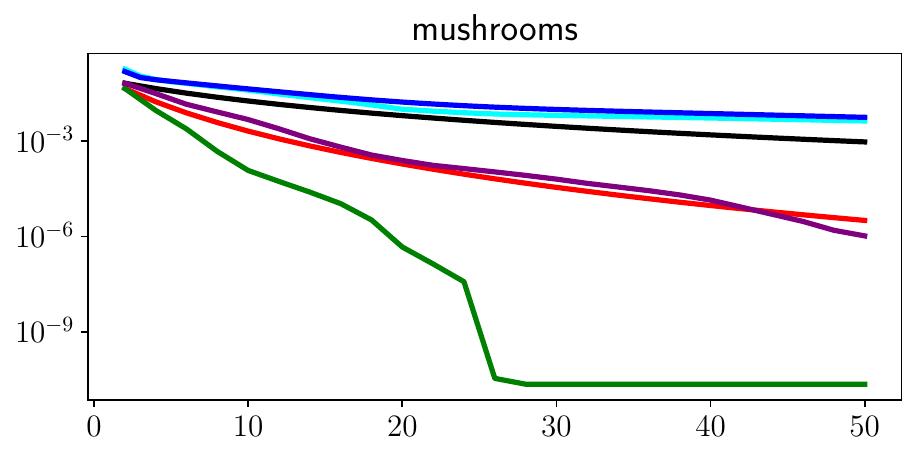}
    \\
    \includegraphics[width=0.3\textwidth]{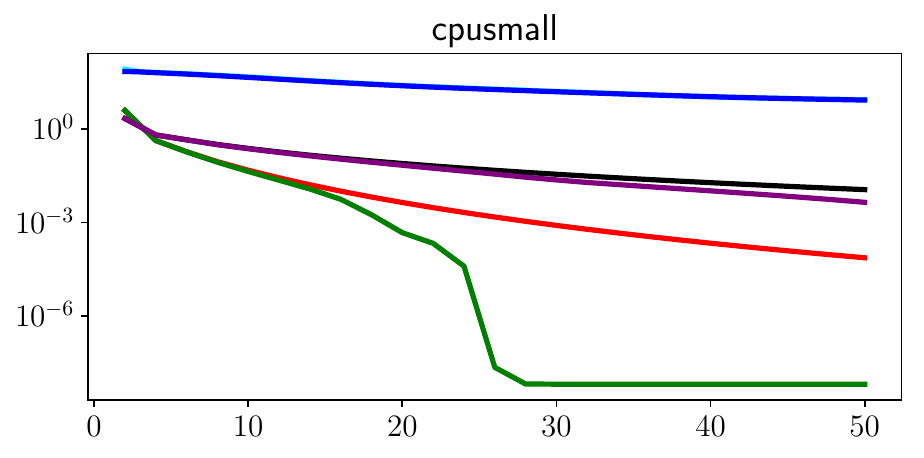}
    \includegraphics[width=0.3\textwidth]{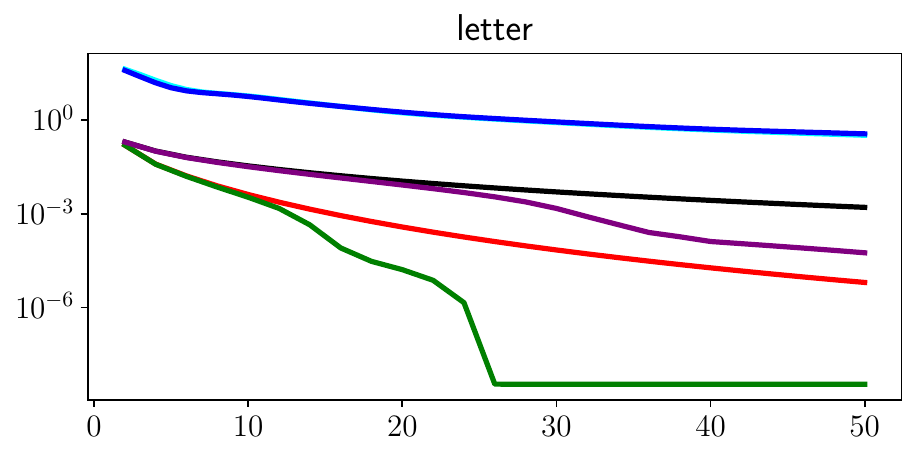}
    \includegraphics[width=0.3\textwidth]{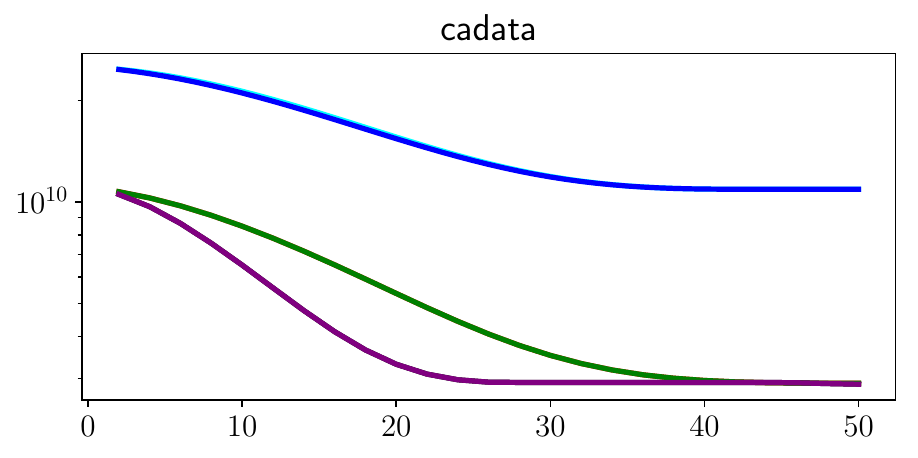}
\vspace{-3mm}
\caption{optimality gaps}
\end{subfigure}

\vspace{-4mm}
\caption{Results on real datasets for exact ARMD (same settings as Fig.~\ref{fig_synthetic}).}
\label{fig_resall}
\end{figure*}

\noindent
\textbf{Inexact ARMD:}
We consider the overlapping group Lasso problem \cite{jacob2009group}, which is also an instance of \eqref{eq:composite} with $ f_i(x) = \frac12 (\iprod{a_i}{x} - b_i)^2 $ and $ P(x) = \lambda \Omega_{\text{overlap}}^{\mathcal{G}}(x) $.
For a collection of overlapping groups $\mathcal{G} = \{ G_r \}_{r=1}^B$ where $G_r \subseteq \{ 1, \ldots, p \}$ and $p$ is the number of dimensions of $x$, the penalty $\Omega_{\text{overlap}}^{\mathcal{G}}(x)$ is defined as:
\begin{equation}
\label{eq:overlap-reg}
\textstyle
\Omega_{\text{overlap}}^{\mathcal{G}}(x) = \inf_{(v_1,\ldots,v_B), v_r \in \mathbb{R}^p, \text{supp}(v_r) \subseteq G_r, \sum_{r=1}^B v_r = x} \left\{ \sum_{r=1}^B \norm{v_r}_2 \right\}.
\end{equation}
For this penalty, the proximal points cannot be computed exactly in a finite number of steps \cite{mosci2010primal}, thus we need to use inexact methods to compute the proximal points up to an error $\varepsilon_s$.
In this experiment, we use the same real datasets and settings above, except that the proximal points $z_{k,s}$ and $x_{k,s}$ for ARMD II are estimated using the projection method proposed in \cite{mosci2010primal}.
We set $\mathcal{G} = \{ \{ 1, 2, 3\}, \{ 3, 4, 5\}, \{ 5, 6, 7 \}, \ldots \}$, $\lambda = 0.1$, $\varepsilon_s = 0.01/s^{4.001}$, and compare inexact ARMD methods with FISTA, APG, SAGA, and SVRG for overlapping group Lasso \cite{mosci2010primal}.
The value of $\epsilon_s$ is chosen to ensure the convergence of APG (which requires $\epsilon_s = O(1/s^{4+\delta})$, see \cite{Schmidt2011}) as well as ARMD (which requires $\varepsilon_s = O(1/s^{3+\delta})$).%The factor $10^{-3}$ in $\varepsilon_s$ is added to make sure the errors of the first few iterations are not too large.
The penalty $\Omega_{\text{overlap}}^{\mathcal{G}}(x)$ is computed at any point $x$ by solving Problem \eqref{eq:overlap-reg}.

Fig.~\ref{fig_resall_iasmd} plots the objective value and optimality gap against $\frac{\# \text{ of computed gradients}}{n}$.
From Fig.~\ref{fig_resall_iasmd}(a), FISTA reduces the objective value slightly faster than other algorithms during the first few iterations in many cases, but it eventually converges to worse optimal values than ARMD methods.
Regarding to optimality gaps,  ARMD methods achieve better optimality gaps than the other algorithms in all except for the mushrooms dataset, where SVRG performs better.

\begin{figure*}[!t]
\centering
\begin{subfigure}[b]{\textwidth}
    \centering
    \includegraphics[width=0.3\textwidth]{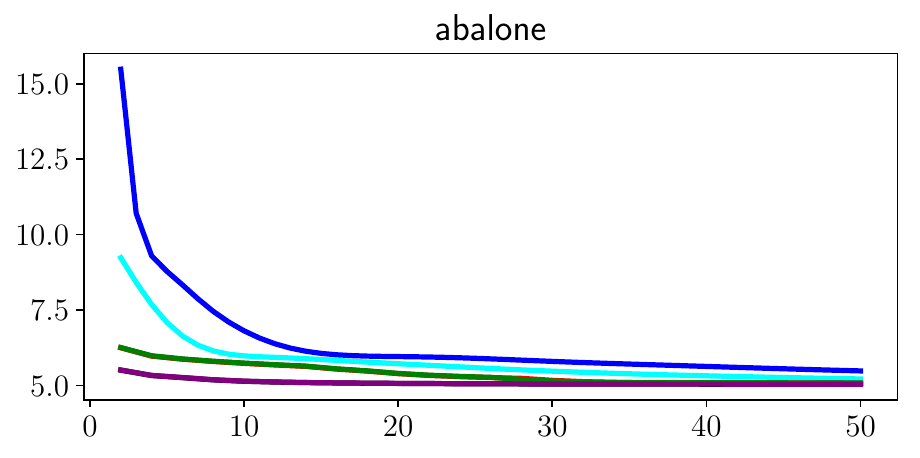}    
    \includegraphics[width=0.3\textwidth]{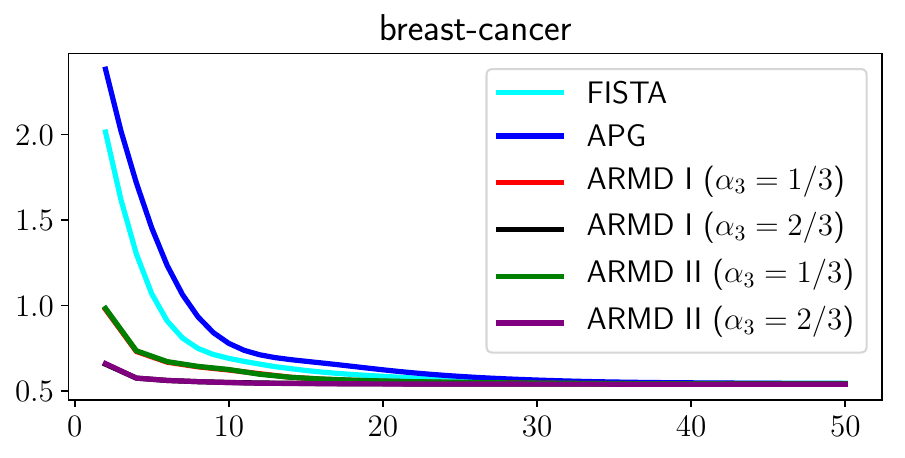}
    \includegraphics[width=0.3\textwidth]{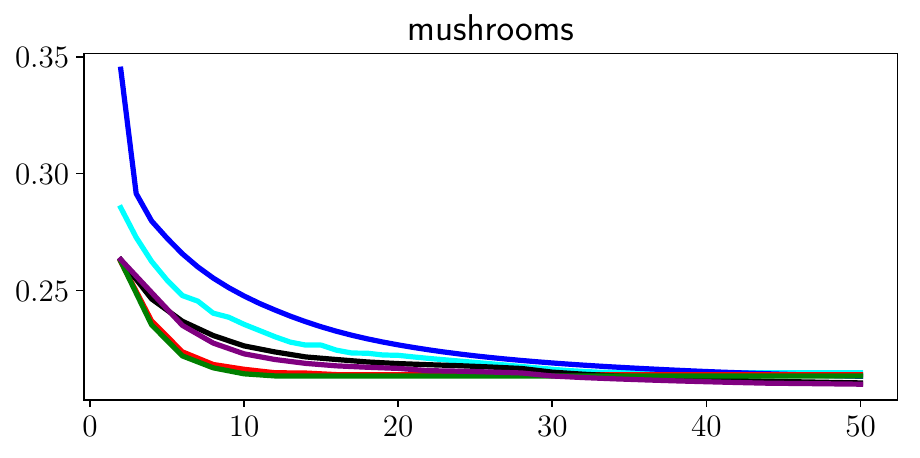}
    \\
    \includegraphics[width=0.3\textwidth]{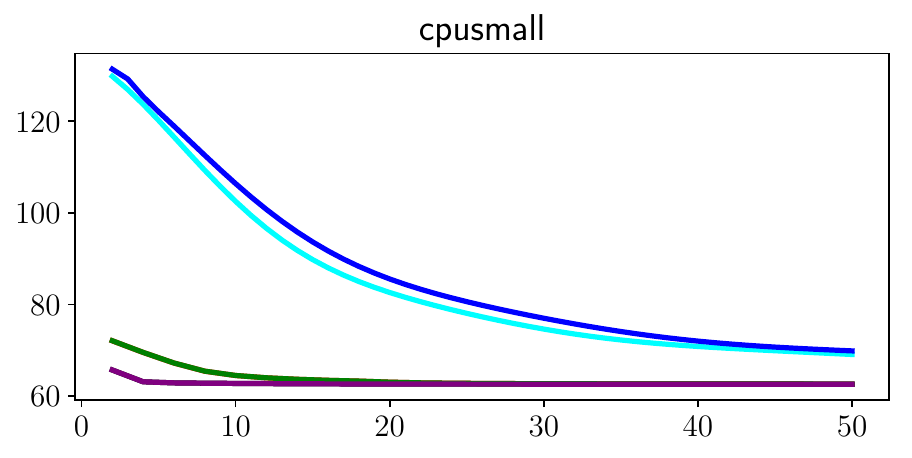}
    \includegraphics[width=0.3\textwidth]{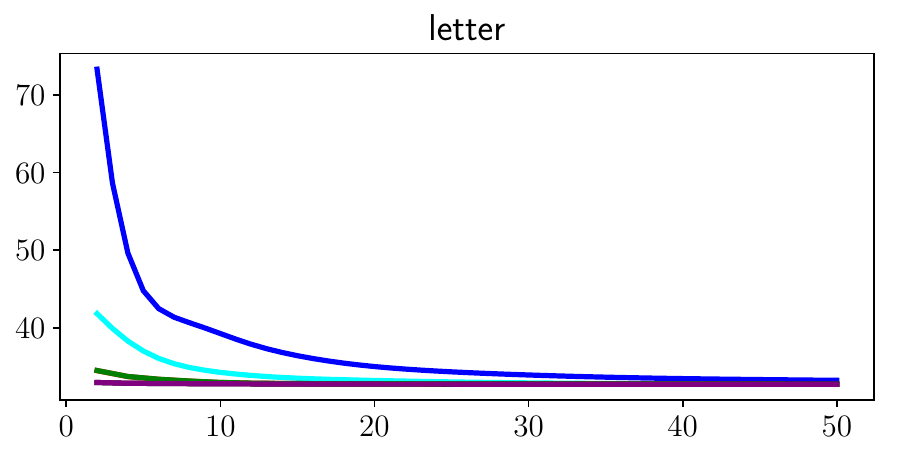}
    \includegraphics[width=0.3\textwidth]{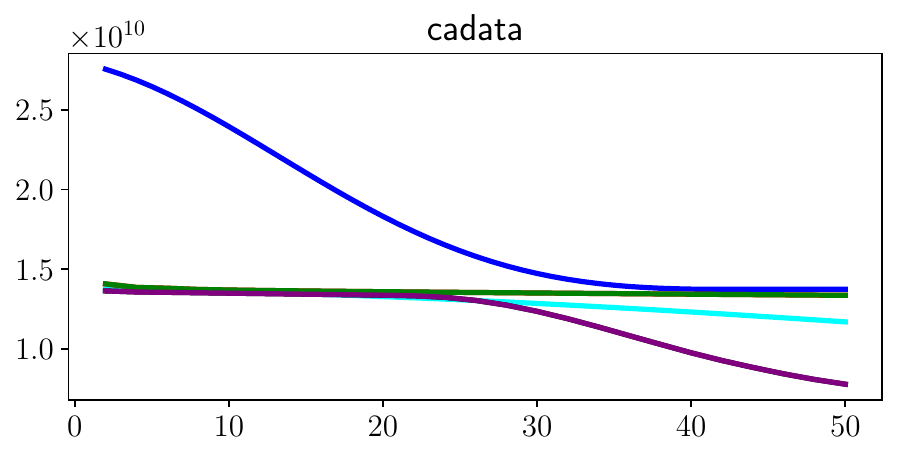}
\vspace{-3mm}
\caption{objective value}
\end{subfigure}

\begin{subfigure}[b]{\textwidth}
    \centering
    \includegraphics[width=0.3\textwidth]{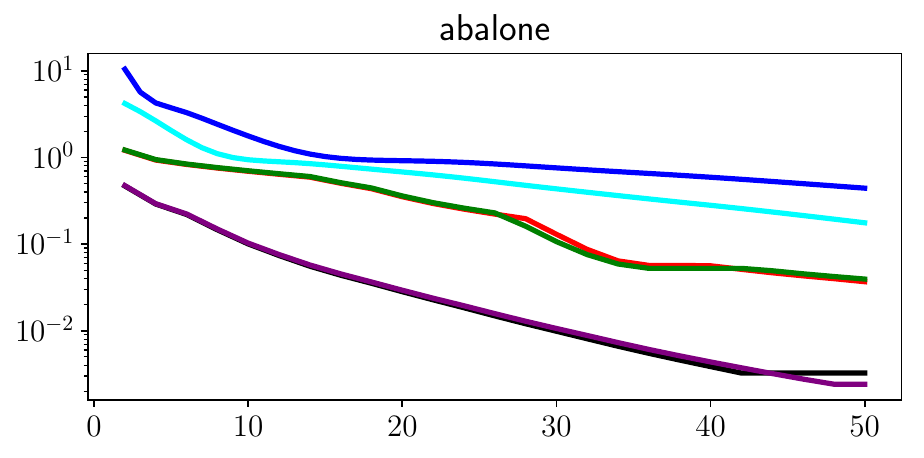}    
    \includegraphics[width=0.3\textwidth]{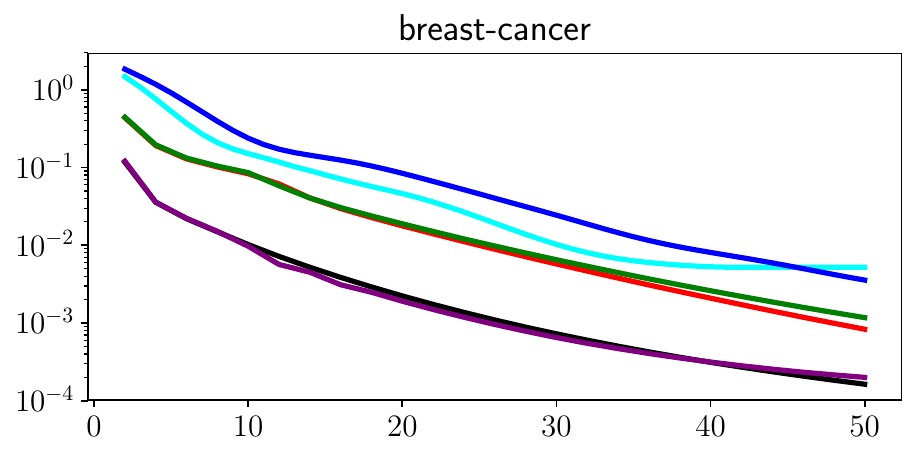}
    \includegraphics[width=0.3\textwidth]{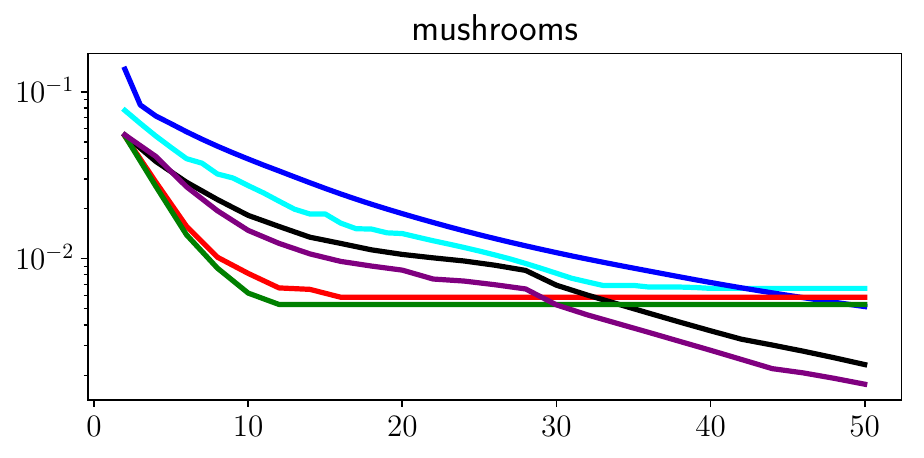}
    \\
    \includegraphics[width=0.3\textwidth]{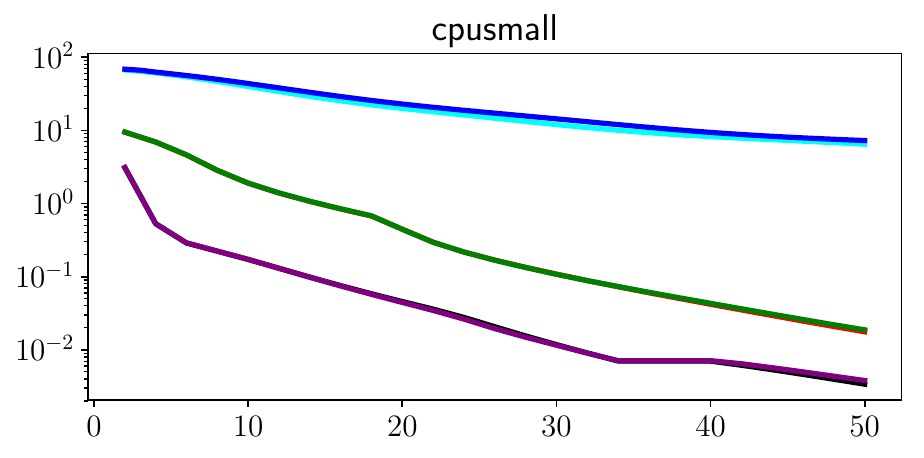}
    \includegraphics[width=0.3\textwidth]{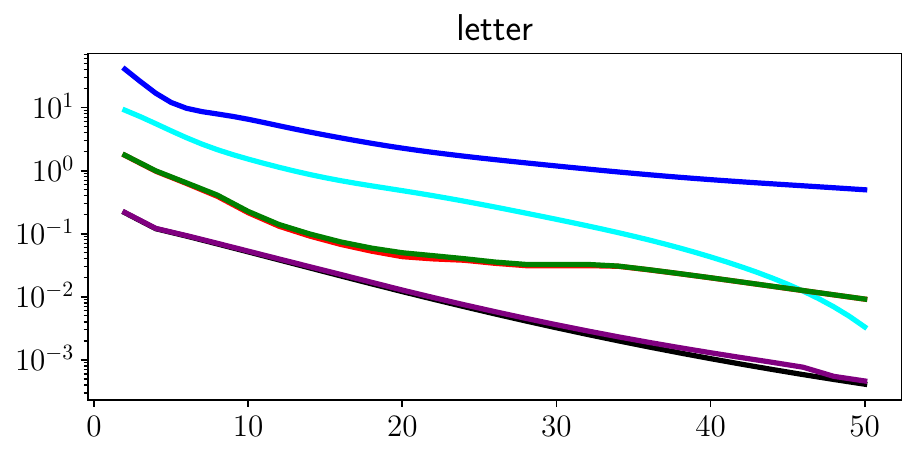}
    \includegraphics[width=0.3\textwidth]{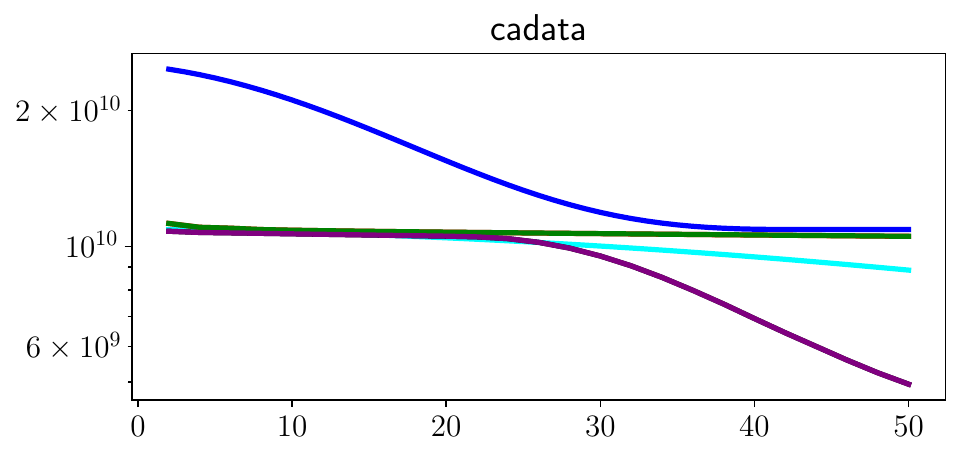}
\vspace{-3mm}
\caption{optimality gaps}
\end{subfigure}

\vspace{-5mm}
\caption{Comparison of inexact versions of FISTA, APG and ARMD for overlapping group Lasso problem. The vertical axis is the objective value (or optimality gap) and the horizontal axis is the number of computed gradients$/n$.}
\label{fig_resall_iasmd}
\end{figure*}

\section{Conclusion}
We have proposed a framework of accelerated randomized mirror descent algorithms for solving the large scale optimization problem \eqref{eq:composite} without the strongly convex assumption of $F^P(x)$. Our framework allows proximal points to be calculated inexactly and can achieve the optimal convergence rate. Using suitable parameters, our algorithms can obtain better complexity than APG. We also proposed a scheme for solving Problem \eqref{eq:composite} with non-smooth component functions $f_i$. Computational results affirm the effectiveness of our algorithms.

\appendix

\section*{Appendix: Technical Proofs} 

\noindent
\textbf{Proof of Lemma \ref{lem:variance}:}
%The following proof is also given in \cite[Corollary 3]{Xiao_Zhang}.
We have
\[
\begin{split}
\expect\norm{\grad F(y_{k,s}) - v_k}_*^2&=\expect\norm{1/(nq_{i_k})\bracket{\grad f_{i_k}(y_{k,s}) - \grad f_{i_k}(\tilde{x}_{s-1})}-(\grad F(y_{k,s})-\grad F(\tilde{x}_{s-1}))}_*^2\\
&\leq \expect \bracket{\norm{1/(nq_{i_k})\bracket{\grad f_{i_k}(y_{k,s}) - \grad f_{i_k}(\tilde{x}_{s-1})}}_* + \norm{\grad F(y_{k,s})-\grad F(\tilde{x}_{s-1})}_*}^2\\
&\leq 2\expect \frac{1}{(nq_{i_k})^2} \norm{\grad f_{i_k}(y_{k,s})-\grad f_{i_k}(\tilde{x}_{s-1})}_*^2 +2\norm{\grad F(y_{k,s})-\grad F(\tilde{x}_{s-1}) }_*^2.
\end{split}
\]
\noindent
\textbf{Proof of Lemma \ref{lem:bridge}:}
For notation succinctness, we omit the subscript $s$ when no confusing is caused. Applying Lemma \ref{lem:LiGrad}(1), we have:
\[
\begin{split}
F^P(x_k)&=\frac{1}{n}\sum\limits_{i=1}^n f_i(x_k) + P(x_k)\\
& \leq \frac{1}{n}\sum\limits_{i=1}^n\bracket{ f_i(y_k) + \iprod{\grad f_i(y_k)}{x_k-y_k}+\frac{L_i}{2}\norm{x_k-y_k}^2} + P(x_k)\\
&=F(y_k) + \iprod{\nabla F(y_k)-v_k}{x_k-y_k}+\frac{L_A}{2}\norm{x_k-y_k}^2 + P(x_k)+\iprod{v_k}{x_k-y_k}\\
&\leq F(y_k)+\frac{2L_Q}{\alpha_3} \norm{x_k-y_k}^2 + \frac{\alpha_3}{8 L_Q}\norm{\grad F(y_k) -v_k}_*^2+\frac{L_A}{2}\norm{x_k-y_k}^2 + P(x_k)+\iprod{v_k}{x_k-y_k},
\end{split}
\]
where the last inequality uses $\iprod{a}{b}\leq \frac{1}{2}\norm{a}_*^2 + \frac12\norm{b}^2$. Together with the update rule \eqref{eq:choosexk}, Lemma \ref{lem:DgeqE} with $\sigma=1$, and noting that $\hat{x}_k - y_k=\alpha_2(z_k-z_{k-1})$, we get:
\begin{align*}
\begin{split}
F^P(x_k)&\leq F(y_k) + \frac{\alpha_3}{8 L_Q}\norm{\grad F(y_k) -v_k}_*^2+ \iprod{v_k}{\hat{x}_k-y_k}+\frac{1}{2}
\bracket{L_A+\frac{4L_Q}{\alpha_3}}\norm{\hat{x}_k-y_k}^2+P(\hat{x}_k)\\
&=F(y_k) + \frac{\alpha_3}{8 L_Q}\norm{\grad F(y_k) -v_k}_*^2+ \alpha_2\iprod{v_k}{z_k-z_{k-1}}+\frac{1}{2}
\overline{L}\alpha_2^2 \norm{z_k-z_{k-1}}^2+P(\hat{x}_k)\\
&\leq F(y_k) + \frac{\alpha_3}{8 L_Q}\norm{\grad F(y_k) -v_k}^2+ \alpha_2\bracket{\iprod{v_k}{z_k-z_{k-1}}+\theta_s D(z_k,z_{k-1})} +P(\hat{x}_k).
\end{split}
\end{align*}

\noindent
\textbf{Proof of Lemma \ref{lem:approx}:}
Let $\phi_k(x)=\frac{1}{\theta_s }(\iprod{v_k}{x} + P(x))$, then  $\bar{z}_{k,s}=\argmin\limits_{x\in X_s} \{\phi_k(x)+D(x,z_{k-1,s})\}$. From Lemma \ref{lem:Dinequal}, for all $x\in X_s \cap \domain (P)$, we have:
\[
\frac{1}{\theta_s}(\iprod{v_k}{x} + P(x)) + D(x,z_{k-1,s}) \geq \min\limits_{x\in X_s} \{\phi_k(x) + D(x,z_{k-1,s})\} + D(x,\bar{z}_{k,s}).
\]
Together with $z_{k,s}\approx_{\varepsilon_{k,s}}\arg\min_{x\in X_s}\theta_s( \phi(x) + D(x,z_{k-1,s}))$, we get:
\begin{equation}\label{eq:Dzkzbar}
\iprod{v_k}{x} + P(x)+ \theta_sD(x,z_{k-1,s})\geq \iprod{v_k}{z_{k,s}}+P(z_{k,s}) + \theta_sD(z_{k,s},z_{k-1,s}) -\varepsilon_{k,s}+ \theta_s D(x,\bar{z}_{k,s}).
\end{equation}
From Lemma \ref{lem:triangle}, we get
$D(x,\bar{z}_{k,s})=D(x,z_{k,s})+ D(z_{k,s},\bar{z}_{k,s})-\iprod{x-z_{k,s}}{\grad h(\bar{z}_{k,s})-\grad h(z_{k,s})}.
$
Thus, the result follows.

\noindent
\textbf{Proof of Proposition \ref{prop:root}:}
For notation succinctness, we omit the subscript $s$ when no confusion is caused. Applying Lemma \ref{lem:bridge}, we have:
\begin{equation}\label{eq:i1}
F^P(x_k)\leq F(y_k) + \frac{\alpha_3}{8 L_Q}\norm{\grad F(y_k) -v_k}_*^2 + \alpha_2\bracket{\iprod{v_k}{z_k-z_{k-1}} + \theta_s D(z_k,z_{k-1})}
 + P(\hat{x}_k) .
\end{equation}
From Inequality \eqref{eq:i1} and Lemma \ref{lem:approx}, we deduce that:
\begin{equation}\label{eq:i2}
\begin{split}
F^P(x_k) &\leq F(y_k) + \frac{\alpha_3}{8 L_Q}\norm{\grad F(y_k) -v_k}^2+ \alpha_2( \iprod{v_k}{x-z_{k-1}}+P(x)-P(z_k)) + P(\hat{x}_k)\\
&+\alpha_2 \theta_s (D(x,z_{k-1})-D(x,z_k)-D(z_k,\bar{z}_k)+ \iprod{x-z_{k}}{\grad h(\bar{z}_{k})-\grad h(z_{k})} ) +\alpha_2\varepsilon_{k,s}.
\end{split}
\end{equation}
Taking expectation with respected to $i_k$ conditioned on $i_{k-1}$, and noting that $\expect_{i_k} [v_k]=\grad F(y_k)$ (for notation succinctness, we omit the subscript $i_k$ of the conditional expectation when it is clear in the context) and $P(\hat{x}_k) \leq \alpha_1 P(x_{k-1})+\alpha_2 P(z_k) + \alpha_3 P(\tilde{x}_{s-1})$,  it follows from \eqref{eq:i2} that:
\begin{equation}\label{eq:i3}
\begin{split}
&\expect F^P(x_k)\leq F(y_k) + \alpha_3\bracket{\frac{1}{8L_Q}\expect\norm{\grad F(y_k) - v_k}_*^2+ \iprod{\nabla F(y_k)}{\tilde{x}_{s-1} - y_k}}-\alpha_3 \iprod{\nabla F(y_k)}{\tilde{x}_{s-1} - y_k}\\
&+ \alpha_2\iprod{\grad F(y_k)}{x-z_{k-1}}+\alpha_2 P(x) +\alpha_1 P(x_{k-1})+\alpha_3 P(\tilde{x}_{s-1})+\alpha_2\theta_s ( D(x,z_{k-1}) - \expect D(x,z_k) ) + r_k,
\end{split}
\end{equation}
On the other hand, applying Lemma \ref{lem:variance}, the second inequality of Lemma \ref{lem:LiGrad} and noting that $\frac{1}{L_Q n q_i}\leq \frac{1}{L_i}$ and $\frac{1}{L_Q}\leq \frac{1}{L_A}$, we have:
\begin{align}\label{eq:var}
\begin{split}
&\frac{1}{8L_Q}\expect\norm{\grad F(y_k) - v_k}_*^2+  \iprod{\nabla F(y_k)}{\tilde{x}_{s-1} - y_k}\\
&\;\leq \frac{1}{4L_Q}\expect \frac{1}{(nq_{i_k})^2} \norm{\grad f_{i_k}(y_k)-\grad f_{i_k}(\tilde{x}_{s-1})}_*^2 + \frac{1}{4L_Q} \norm{\grad F(y_k)-\grad F(\tilde{x}_{s-1})}_*^2+ \iprod{\nabla F(y_k)}{\tilde{x}_{s-1} - y_k}\\
&\; =\frac{1}{n}\sum\limits_{i=1}^n \frac{1}{4L_Q} \frac{1}{nq_i}\norm{\grad f_i(y_k)-\grad f_i(\tilde{x}_{s-1})}_*^2 + \frac{1}{2n}\sum\limits_{i=1}^n \iprod{\grad f_i(y_k)}{\tilde{x}_{s-1} - y_k}+\frac{1}{2 }\iprod{\nabla F(y_k)}{\tilde{x}_{s-1} - y_k}\\&\quad\qquad +\frac{1}{4L_Q} \norm{\grad F(y_k)-\grad F(\tilde{x}_{s-1})}_*^2  \\
&\; \leq \frac{1}{2n}\sum\limits_{i=1}^n \bracket{\frac{1}{2L_i} \norm{\grad f_i(y_k)-\grad f_i(\tilde{x}_{s-1})}_*^2  +\iprod{\grad f_i(y_k)}{\tilde{x}_{s-1} - y_k}}\\
&\qquad\qquad+\frac12\bracket{\frac{1}{2L_A}\norm{\grad F(y_k)-\grad F(\tilde{x}_{s-1})}_*^2  + \iprod{\nabla F(y_k)}{\tilde{x}_{s-1} - y_k} }\\
&\;\leq \frac{1}{2n}\sum\limits_{i=1}^n (f_i(\tilde{x}_{s-1})-f_i(y_k))+\frac{1}{2}\bracket{F(\tilde{x}_{s-1}) - F(y_k)}= F(\tilde{x}_{s-1}) - F(y_k).
\end{split}
\end{align}
Therefore, \eqref{eq:i3} and \eqref{eq:var} imply that:
\[\begin{split}
\expect F^P(x_k)& \leq (1-\alpha_3)F(y_k) + \alpha_3 F^P(\tilde{x}_{s-1}) + \alpha_2\iprod{\grad F(y_k)}{x-y_k} +\alpha_2 P(x)+ \alpha_2\iprod{\grad F(y_k)}{y_k-z_{k-1}} \\&\qquad- \alpha_3 \iprod{\grad F(y_k)}{\tilde{x}_{s-1} -y_k}+\alpha_1 P(x_{k-1}) + \alpha_2 \theta_s (D(x,z_{k-1}) - \expect D(x,z_k))+r_k\\
&\lea (1-\alpha_3)F(y_k) + \alpha_3 F^P(\tilde{x}_{s-1})+\alpha_2(F(x)-F(y_k))+\alpha_2 P(x)\\
&\qquad + \alpha_1\iprod{\grad F(y_k)}{x_{k-1}-y_k} + \alpha_1 P(x_{k-1})+ \alpha_2 \theta_s (D(x,z_{k-1}) - \expect D(x,z_k))+r_k\\
&\leb (1-\alpha_3-\alpha_2) F(y_k) +  \alpha_3 F^P(\tilde{x}_{s-1})+ \alpha_2 F^P(x) \\
&\qquad+ \alpha_1(F(x_{k-1}) - F(y_k)) + \alpha_1 P(x_{k-1})+ \alpha_2 \theta_s (D(x,z_{k-1}) - \expect D(x,z_k))+r_k\\
& = \alpha_1 F^P(x_{k-1}) + \alpha_2 F^P(x) + \alpha_3 F^P(\tilde{x}_{s-1})+ \alpha_2 \theta_s (D(x,z_{k-1}) - \expect D(x,z_k))+r_k.
\end{split}
\]
Here in (a) we use $\iprod{\grad F(y_k)}{x-y_k} \leq F(x)-F(y_k)$ and 
$\alpha_2(y_k-z_{k-1})-\alpha_3(\tilde{x}_{s-1}-y_k)=\alpha_1(x_{k-1}-y_k)$, in (b) we use $\iprod{\grad F(y_k)}{x_{k-1}-y_k} \leq F(x_{k-1})-F(y_k)$. Finally, we take expectation with respected to $i_{k-1}$ to get the result.

\noindent
\textbf{Proof of Proposition \ref{prop:result}:}
Applying Proposition \ref{prop:root} with $x=x^*$ we have: 
\[\begin{split}
\expect (F^P(x_{k,s}) -F^P(x^*))&\leq \alpha_{1,s} \expect (F^P(x_{k-1,s})-F^P(x^*))+\alpha_{3}(F^P(\tilde{x}_{s-1})-F^P(x^*)) \\
&\qquad + \alpha_{2,s}^2\overline{L} (\expect D(x^*,z_{k-1,s}) - \expect D(x^*,z_{k,s}))+ r^*_{k,s}.
\end{split}
\]
Denote $d_{k,s}=\expect (F^P(x_{k,s}) -F^P(x^*))$, then
\[d_{k,s} \leq \alpha_{1,s} d_{k-1,s} + \alpha_{3} \tilde{d}_{s-1} + \alpha_{2,s}^2\overline{L} (\expect D(x^*,z_{k-1,s}) - \expect D(x^*,z_{k,s}))+ r^*_{k,s},
\]
which implies 
$
\frac{1}{\alpha_{2,s}^2}d_{k,s} \leq \frac{\alpha_{1,s}}{\alpha_{2,s}^2} d_{k-1,s} + \frac{\alpha_{3}}{\alpha_{2,s}^2}\tilde{d}_{s-1} + \overline{L}  (\expect D(x^*,z_{k-1,s}) - \expect D(x^*,z_{k,s}))+\frac{ r^*_{k,s}}{\alpha_{2,s}^2}.
$
Summing up this inequality from $k=1$ to $k=m$ we get:
\[
\begin{split}
\frac{1}{\alpha_{2,s}^2}d_{m,s}+\frac{1-\alpha_{1,s}}{\alpha_{2,s}^2}\sum\limits_{k=1}^{m-1} d_{k,s}\leq\frac{\alpha_{1,s}}{\alpha_{2,s}^2}d_{0,s}+ \frac{\alpha_{3}}{\alpha_{2,s}^2}m \tilde{d}_{s-1} + \overline{L} \bracket{\expect D(x^*,z_{0,s}) - \expect D(x^*,z_{m,s}) } +\frac{\sum_{k=1}^m   r^*_{k,s}}{\alpha_{2,s}^2}.
\end{split}
\]
Using the update rule \eqref{eq:xsupdate}, $\alpha_{1,s}+\alpha_{3}=1-\alpha_{2,s}$, $z_{m,s-1}=z_{0,s}$ and $d_{m,s-1}=d_{0,s}$ we get:
\[
\begin{split}
\frac{1}{\alpha_{2,s}^2}d_{m,s}+\frac{1-\alpha_{1,s}}{\alpha_{2,s}^2}\sum\limits_{k=1}^{m-1} d_{k,s} &\leq \frac{1-\alpha_{2,s}}{\alpha_{2,s}^2}d_{m,s-1} + \frac{\alpha_{3}}{\alpha_{2,s}^2} \sum\limits_{k=1}^{m-1} d_{k,s-1} \\
&\quad + \overline{L} \bracket{\expect D(x^*,z_{m,s-1}) - \expect D(x^*,z_{m,s})}+\frac{\sum_{k=1}^m  r^*_{k,s}}{\alpha_{2,s}^2}.
\end{split}
\]
Combining with the update rule \eqref{eq:alphaupdate} we obtain:
\begin{equation}\label{eq:recursive}
\begin{split}
\frac{1-\alpha_{2,s+1}}{\alpha_{2,s+1}^2}d_{m,s}+\frac{\alpha_{3}}{\alpha_{2,s+1}^2} \sum\limits_{k=1}^{m-1} d_{k,s}
&\leq \frac{1-\alpha_{2,s}}{\alpha_{2,s}^2} d_{m,s-1} + \frac{\alpha_{3}}{\alpha_{2,s}^2} \sum\limits_{k=1}^{m-1} d_{k,s-1} \\
&\quad + \overline{L} \bracket{\expect D(x^*,z_{m,s-1}) - \expect D(x^*,z_{m,s})}+\frac{\sum_{k=1}^m  r^*_{k,s}}{\alpha_{2,s}^2}.
\end{split}
\end{equation}
Therefore, 
\[\begin{split}
\frac{\alpha_{3}}{\alpha_{2,s+1}^2} m\tilde{d}_s &\lea
\frac{\alpha_{3}}{\alpha_{2,s+1}^2}\sum\limits_{k=1}^{m} d_{k,s}
\leb \frac{1-\alpha_{2,s+1}}{\alpha_{2,s+1}^2}d_{m,s}+\frac{\alpha_{3}}{\alpha_{2,s+1}^2} \sum\limits_{k=1}^{m-1} d_{k,s}\\
&\lec \frac{1-\alpha_{2,1}}{\alpha_{2,1}^2} d_{m,0}+\frac{\alpha_{3}}{\alpha_{2,1}^2}\sum\limits_{k=1}^{m-1}d_{k,0}+\overline{L}  \bracket{ \expect D(x^*,z_{m,0}) - \expect D(x^*,z_{m,s})}+\sum\limits_{i=1}^s\frac{\sum_{k=1}^m r^*_{k,i}}{\alpha_{2,i}^2},
\end{split}
\]
where in (a) we use the update rule \eqref{eq:xsupdate}, in (b) we use the property $\alpha_3 \leq 1-\alpha_{2,s+1}$, and in (c) we use the recursive inequality \eqref{eq:recursive}. The result then follows.

\noindent
\textbf{Proof of Theorem \ref{thrm:main}:}
Without loss of generality, we can assume that:
$$
\frac{1}{m}\bracket{\frac{4(1-\alpha_{2,1})}{\alpha_{2,1}^2\alpha_3} d_{m,0}+\frac{4}{\alpha_{2,1}^2}\sum\limits_{i=1}^{m-1}d_{i,0}}=O(F^P(\tilde{x}_0)-F^P(x^*)).
$$
When $\varepsilon_{k,s}=0$, then $z_{k,s}=\bar{z}_{k,s}$ and we have $r_{k,s}=0$. The convergence rate of exact ASMD follows from Proposition \ref{prop:result} by taking $\alpha_{2,s}=\frac{2}{s+2}$ and noting that $D(x^*,z_{m,s})\geq 0$.

\noindent
\textbf{Proof of Theorem \ref{thrm:main_inexact}:}
We remind that Inequality \eqref{eq:Dzkzbar} holds for all $x$. Taking $x=z_{k,s}$, \eqref{eq:Dzkzbar} yields that $D(z_{k,s},\bar{z}_{k,s})\leq \frac{\varepsilon_{k,s}}{\theta_s}$. On the other hand, if $ h(\cdot)$ is $L_h$-Lipschitz smooth, then:
\[\norm{\grad h(\bar{z}_{k,s})-\grad h(z_{k,s})}\leq L_h \norm{\bar{z}_{k,s}-z_{k,s}}\leq L_h\sqrt{2D(z_{k,s},\bar{z}_{k,s})}\leq L_h\sqrt{\frac{2\varepsilon_{k,s}}{\theta_s}}.
\]
If $\norm{z_{k,s}}\leq  C$ then we let $C_1=\norm{x^*}+C$. Noting that $D(z_{k,s},\bar{z}_{k,s})\geq 0$, we have 
\[
\begin{split}
{r}^*_{k,s} \leq \alpha_{2,s}\theta_s\norm{x^*-z_{k,s}}\norm{\grad h(\bar{z}_{k,s})-\grad h(z_{k,s})}+ \alpha_{2,s}\varepsilon_{k,s} \leq \alpha_{2,s} C_1 L_h\sqrt{2\epsilon_s\theta_s}+ \alpha_{2,s}\epsilon_s. 
\end{split}
\]
Hence, 
\begin{align}
\label{temp1}
\alpha_{2,s+1}^2\sum\limits_{i=1}^s\sum\limits_{k=1}^m \frac{{r}^*_{k,i}}{m\alpha_3\alpha_{2,i}^2} \leq \alpha_{2,s+1}^2 \sum\limits_{i=1}^s\bracket{ \frac{C_1L_h\sqrt{2\epsilon_i\bar{L}}}{\alpha_3\sqrt{\alpha_{2,i}}}+\frac{\epsilon_i}{\alpha_3\alpha_{2,i}}}.
\end{align} 
If the adaptive inexact rule $\max\left\{\norm{\bar{z}_{k,s}}^2\varepsilon_{k,s},C\varepsilon_{k,s}\right\}\leq C\epsilon_s$ is chosen, we have
\[
\begin{split}
{r}^*_{k,s} &\leq \alpha_{2,s}\theta_s\bracket{\norm{x^*}+\norm{\bar{z}_{k,s}}+\norm{\bar{z}_{k,s}-z_{k,s}}}\norm{\grad h(\bar{z}_{k,s})-\grad h(z_{k,s})}+ \alpha_{2,s}\varepsilon_{k,s}\\
& \leq \alpha_{2,s} \norm{x^*} L_h\sqrt{2\epsilon_s\theta_s}+ \alpha_{2,s} L_h  \sqrt{2C\epsilon_s\theta_s} + \alpha_{2,s}L_h 2 \epsilon_s+ \alpha_{2,s}\epsilon_{s}. 
\end{split}
\]
In this case, we let $C_1=\norm{x^*}+\sqrt{C}$. We then have 
\begin{align}
\label{temp2}
\alpha_{2,s+1}^2\sum\limits_{i=1}^s\sum\limits_{k=1}^m \frac{{r}^*_{k,i}}{m\alpha_3\alpha_{2,i}^2} \leq \alpha_{2,s+1}^2 \sum\limits_{i=1}^s\bracket{ \frac{C_1L_h\sqrt{2\epsilon_i\bar{L}}}{\alpha_3\sqrt{\alpha_{2,i}}}+\frac{(2L_h+1)\epsilon_i}{\alpha_3\alpha_{2,i}}}.
\end{align} 
The result then follows from \eqref{temp1}, \eqref{temp2} and Proposition \ref{prop:result} easily.

\noindent
\textbf{Proof of Theorem \ref{thrm:smoothcase}:}
Let $x^*_\mu$ is the optimal solution of Problem \eqref{eq:smoothprob}. We have:
\begin{equation}\label{eq:smoothproof}
\expect F^P_\mu(\tilde{x}_{\mu,s})-F^P_\mu(x^*_\mu)=O\bracket{\frac{1+\frac{4\overline{L}_\mu}{m}+\bar{C}}{(s+3)^2}},
\end{equation}
where $\bar{C}=O\bracket{\sqrt{\bar{L}_\mu}}$, by applying Theorem \ref{thrm:main_inexact}. 
By Assumption \ref{asstn:smooth}, we have:
\[
\begin{split}
\expect F^P(\tilde{x}_{\mu,s})-F^P(x^*)& = \expect F(\tilde{x}_{\mu,s}) + \expect P (\tilde{x}_{\mu,s}) - F(x^*) - P(x^*)\\
& \leq \expect F_\mu(\tilde{x}_{\mu,s}) +  \overline{K} \mu + \expect P (\tilde{x}_{\mu,s}) - F(x^*)-P(x^*) \\
&\leq\expect F_\mu(\tilde{x}_{\mu,s})+  \overline{K} \mu + \expect P (\tilde{x}_{\mu,s})  -F_\mu(x^*) + \underline{K}\mu-P(x^*) \\
& \leq \expect F^P_\mu(\tilde{x}_{\mu,s})-F^P_\mu(x^*)+ \bracket{\overline{K}+\underline{K}} \mu.
\end{split}
\]
Together with \eqref{eq:smoothproof} and noting that $F^P_\mu(x^*)\geq F^P_\mu(x^*_\mu)$, we get:
\[
\begin{split}
\expect F^P(\tilde{x}_{\mu,s})-F^P(x^*)\leq  \expect F^P_\mu(\tilde{x}_{\mu,s})-F^P_\mu(x_\mu^*)+ \bracket{\overline{K}+\underline{K}} \mu=O\bracket{\frac{1+\frac{4\overline{L}_\mu}{m}+\bar{C}}{(s+3)^2}} + \bracket{\overline{K}+\underline{K}} \mu.
\end{split}
\]

\bibliographystyle{abbrv}
\scriptsize{
\bibliography{ASMD}
}

\end{document}